\setlist[itemize]{%
	topsep=2pt,
	leftmargin=0.6cm
}
\DeclareMathOperator{\spt}{spt}
\DeclareMathOperator{\tr}{tr}
\DeclareMathOperator{\Sym}{Sym}
\DeclareMathOperator{\adj}{adj}
\DeclareMathOperator{\Proj}{Proj}
\DeclareMathOperator{\reff}{ref}
\DeclareMathOperator{\addd}{add}
\DeclareMathOperator{\Per}{Per}
\def\R{\mathbb{R}}
\def\F{\mathcal{F}}
\def\L{\mathcal{L}}
\newcommand{\be}{\begin{equation}}
\newcommand{\ee}{\end{equation}}
\numberwithin{equation}{section}
\theoremstyle{plain}
\newtheorem{theo}{Theorem}[section]
\newtheorem{prop}[theo]{Proposition}
\theoremstyle{definition}
\newtheorem{rema}[theo]{Remark}
\newmdtheoremenv[
	linewidth=2pt,
	roundcorner=5pt,
	skipabove=5pt,
	innertopmargin=0,
	splittopskip=\topskip,
	ntheorem=true,
	innerbottommargin=\topskip,
]{algo}{Algorithm}
\title[Numerical approximation of Blaschke-Santal\'o diagrams]{On the numerical approximation of Blaschke-Santal\'o diagrams using Centroidal Voronoi Tessellations}
\author[B. Bogosel]{Beniamin Bogosel}
\author[G. Buttazzo]{Giuseppe Buttazzo}
\author[E. Oudet]{Edouard Oudet}
\date{}
\begin{document}

\begin{abstract}
Identifying Blaschke-Santal\'o diagrams is an important topic that essentially consists in determining the image $Y=F(X)$ of a map $F:X\to\R^d$, where the dimension of the source space $X$ is much larger than the one of the target space. In some cases, that occur for instance in shape optimization problems, $X$ can even be a subset of an infinite-dimensional space. The usual Monte Carlo method, consisting in randomly choosing a number $N$ of points $x_1,\dots,x_N$ in $X$ and plotting them in the target space $\R^d$, produces in many cases areas in $Y$ of very high and very low concentration leading to a rather rough numerical identification of the image set. On the contrary, our goal is to choose the points $x_i$ in an appropriate way that produces a uniform distribution in the target space. In this way we may obtain a good representation of the image set $Y$ by a relatively small number $N$ of samples which is very useful when the dimension of the source space $X$ is large (or even infinite) and the evaluation of $F(x_i)$ is costly. Our method consists in a suitable use of {\it Centroidal Voronoi Tessellations} which provides efficient numerical results. Simulations for two and three dimensional examples are shown in the paper.
\end{abstract}

\maketitle

\textbf{Keywords:} Blaschke-Santal\'o diagrams, Voronoi tessellations, Monte Carlo methods, optimal transport, Lloyd's algorithm.

\textbf{2020 Mathematics Subject Classification:} 49Q10, 49M20, 65C05, 65C20, 52B35
 
\section{Introduction}\label{sintro} 

In several problems one is faced with the question of identifying the image of a map $F:X\to\R^d$, given $X$ the set of admissible choices and $F$ a given map which describes the performances $F(x)$ of the given choice $x\in X$. In this article we study the approximation of $F(X)$ when the dimension $d$ is small, which amounts to say that the performances $F(x)$ can be summarized by a few outputs, with respect to the dimension of $X$ which may even be infinite. The subset $F(X)\subset\R^d$ is often called Blaschke-Santal\'o diagram when $X$ is a class of {\it shapes}, and may involve quantities such as the volume, the perimeter, the torsional rigidity, the eigenvalues of the Laplace operator $-\Delta$, and other similar geometrical or analytical quantities (see for instance \cite{bucur1999attainable, missingADR, luzu22}).
 
Even when the set $X$ is a subset of a finite dimensional Euclidean space $\R^N$, the identification of the image $F(X)$ through a numerical procedure may present deep and unexpected difficulties. The naive approach consisting in generating a random uniform sampling of $X$ by means of a discrete set $\{x_i\}_{i\in I}$, associated to the corresponding outputs $\{F(x_i)\}_{i\in I}$, may not produce a satisfactory approximation. Some parts of the image $F(X)$ may be very rarely explored by a uniform random sampling of $X$. In these cases one needs a very large number of samples in order to have a quite accurate description of the set $F(X)$. This phenomenon happens to have a dramatic impact when the evaluation of $F$ on the chosen sample requires the solution of one or more partial differential equations. In shape optimization, for instance, this procedure may be too expensive in terms of computational time. Such contexts require to develop a new approach to get a precise description of the image $F(X)$ using a relatively small number of sampling points. The choice of the sampling has to be adjusted carefully in order to comply with the complexity of the map $F$.
 
In the present paper we develop a new method based on Voronoi tessellations which seems much more efficient than the standard random uniform approach. We describe in the following sections the numerical method, and we show some algebraic examples in which the efficiency of our procedure is clearly outlined.

The study of the range of scale invariant ratios between geometric quantities was initiated by Santal\'o in \cite{Santalo} and Blaschke in \cite{Blaschke}. If the geometric image of scale invariant ratios is completely characterized, then all possible inequalities between these quantities are known. In practice, often three geometric functionnals are used to generate at least two scale invariant ratios. 

This approach has been investigated recently in a shape optimization context. We mention \cite{missingADR} concerning the diagram given by the area, the diameter and the inradius. In \cite{ftouhi-lamboley} the inequalities between volume, perimeter and the first Dirichlet-Laplace eigenvalue are investigated. The Cheeger inequality was investigated in \cite{ftouhi-Cheeger} and inequalities involving the first Dirichlet eigenvalue, the torsion and the volume are studied in \cite{ftouhi-polya}. 

In most situations, a complete analytical understanding of the resulting Blaschke-Santal\'o diagrams is not available. This motivates the use of numerical tools. A first method is generating random shapes and computing quantities of interest. This method is illustrated in some works cited above. A more rigorous numerical approach is solving numerical optimization problems finding extreme points for vertical or horizontal slices of the diagram. This method is used in \cite{ftouhi-numerics} using methods described in \cite{AntunesBogosel} and \cite{DiscreteConvex} to perform numerical shape optimization among convex sets. 

This article proposes a completely new alternative approach which generates uniformly distributed samples in the Blaschke-Santal\'o diagram. Implicitly, our method also provides boundary points for the diagram. Compared to \cite{ftouhi-numerics} where multiple constrained numerical optimizations are solved, we use a global iterative process, and we solve numerically a global optimization problem providing a geometrical description of the diagram. We illustrate the method for an algebraic example involving the trace and determinant of symmetric matrices with entries in $[-1,1]$. This simple example is already non-trivial starting from $3\times 3$ matrices. In a second stage we investigate numerically the diagram associated to the area, perimeter and moment of inertia among convex shapes with two axes of symmetry. 

\section{Approximation framework}\label{sec:framework}

\subsection{Optimal transport framework}\label{sec:optimal-transport}

Consider a continuous map $F:X\to\R^d$, with $X$ a compact metric space. In order to have a careful description of the image set $F(X)$ we could randomly choose some points $\{x_1,\dots,x_n\}$ in $X$ and, for a large $n$, the set $\{F(x_k)\ :\ k=1,\dots,n\}$ would give an approximate description of the full image set $F(X)$. However, even if $X$ is a subset of an Euclidean finite dimensional space, due to the nonlinearity of the map $F$, the number $n$ that is necessary to have a rather accurate description of the image set $F(X)$ could be extremely high. In other words, a uniform random choice of points $\{x_1,\dots,x_n\}$ in $X$ does not produce in general a well distributed sequence $F(x_1),\dots,F(x_n)$, and concentration/rarefaction effects very often occur. We should then make the random choice of the points $\{x_1,\dots,x_n\}$ in $X$ according to a probability measure that is not uniform and that depends on the function $F$, in order to obtain a well distributed sequence $F(x_1),\dots,F(x_n)$. If $\L$ is the Lebesgue measure on $F(X)$ the probability measure $\mu$ governing the random choice of points on $X$ should then be such that $F^\#\mu=\L$, being $F^\#$ the push-forward operator related to the function $F$, verifying 
\[ \mathcal L(B) =  F^\#\mu(B) = \mu (f^{-1}(B)),\]
for every measurable set $B\subset F(X)$.

\begin{theo}\label{exist}
Let $X,Y$ be compact metric spaces and let $F:X\to Y$ be a continuous function, with $Y=F(X)$. Then, for every probability measure $\nu$ on $Y$ there exists a probability measure $\mu$ on $X$ such that
$$F^\#\mu=\nu.$$
\end{theo}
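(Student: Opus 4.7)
The plan is to construct $\mu$ by a compactness/approximation argument, avoiding any need for measurable selection theorems. The underlying idea is that the statement is trivially true for finitely-supported $\nu$, and both sides are continuous under weak-$*$ limits.

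First, I would reduce to discrete target measures. Since $Y$ is a compact metric space, for each $n\in\N$ choose a finite Borel partition $\{B_k^n\}_{k=1}^{N_n}$ of $Y$ with $\diam(B_k^n)\le 1/n$ and pick representatives $y_k^n\in B_k^n$. Set
\[
\nu_n = \sum_{k=1}^{N_n}\nu(B_k^n)\,\delta_{y_k^n}.
\]
A standard argument shows $\nu_n\wto\nu$ in $\PP(Y)$: for every $\phi\in C(Y)$, uniform continuity of $\phi$ on the compact set $Y$ forces $\big|\int\phi\,d\nu-\int\phi\,d\nu_n\big|\le\omega_\phi(1/n)\to 0$, where $\omega_\phi$ is the modulus of continuity.

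Next I would lift each $\nu_n$ to $X$. By surjectivity, for each $k,n$ the fibre $F^{-1}(y_k^n)$ is nonempty, so pick any $x_k^n\in F^{-1}(y_k^n)$ and define
\[
\mu_n = \sum_{k=1}^{N_n}\nu(B_k^n)\,\delta_{x_k^n}\in\PP(X).
\]
By construction $F^\#\mu_n=\nu_n$, since push-forward of a Dirac is the Dirac at the image point.

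Finally I would pass to the limit. Since $X$ is a compact metric space, $\PP(X)$ is weak-$*$ sequentially compact (Prokhorov's theorem, or directly: $C(X)$ is separable and the unit ball of its dual is weak-$*$ compact). Extract a subsequence $\mu_{n_j}\wto\mu$ for some $\mu\in\PP(X)$. Push-forward by the continuous map $F$ is weak-$*$ continuous: for every $\psi\in C(Y)$ one has $\psi\circ F\in C(X)$, so
\[
\int_Y\psi\,d(F^\#\mu_{n_j})=\int_X\psi\circ F\,d\mu_{n_j}\to\int_X\psi\circ F\,d\mu=\int_Y\psi\,d(F^\#\mu).
\]
Hence $F^\#\mu_{n_j}\wto F^\#\mu$. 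Combined with $F^\#\mu_{n_j}=\nu_{n_j}\wto\nu$ and uniqueness of weak-$*$ limits, we conclude $F^\#\mu=\nu$.

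There is no real obstacle here: the only subtle point is the weak-$*$ continuity of $F^\#$, which hinges crucially on the continuity (not mere measurability) of $F$. If one only assumed $F$ Borel, one would have to invoke a selection theorem (Kuratowski--Ryll-Nardzewski or Jankov--von Neumann), but the hypotheses of the statement make the elementary argument above sufficient.
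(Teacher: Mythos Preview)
Your proof is correct and follows essentially the same route as the paper: approximate $\nu$ weakly* by discrete measures, lift each Dirac to a preimage via surjectivity of $F$, and pass to a subsequential weak* limit of the lifted measures using compactness of $\PP(X)$ and weak* continuity of $F^\#$. The only cosmetic difference is that the paper writes its discrete approximants as empirical measures $\frac{1}{n}\sum_{k=1}^n\delta_{y_{n,k}}$ rather than your weighted sums $\sum_k\nu(B_k^n)\delta_{y_k^n}$, and omits the justifications (of $\nu_n\wto\nu$ and of the continuity of $F^\#$) that you spell out.
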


\begin{proof}
Let $\nu$ be a probability measure on $Y$ and let $(\nu_n)$ be a sequence of discrete probability measures on $Y$ with $\nu_n\rightharpoonup\nu$ weakly*. Each $\nu_n$ has the form
$$\nu_n=\frac1n\sum_{k=1}^n\delta_{y_{n,k}}$$
where $y_{n,k}$ are suitable points in $Y$. Since $Y=F(X)$ we may take $x_{n,k}\in X$ such that $F(x_{n,k})=y_{n,k}$ and define a discrete probability measure $\mu_n$ on $X$ by
$$\mu_n=\frac1n\sum_{k=1}^n\delta_{x_{n,k}}.$$
Then $F^\#\mu_n=\nu_n$ and, possibly passing to a subsequence, we may assume that $\mu_n\rightharpoonup\mu$ weakly* for some probability measure $\mu$ on $X$. Passing now to the limit as $n\to\infty$, we obtain $F^\#\mu=\nu$.
\end{proof}

The usual uniform Monte Carlo method consists in taking $\mu$ as the Lebesgue measure on the source space $X$. 
In this case it often happens that the image points are unevenly distributed in $Y$, making the numerical identification of the image set $Y$ of a rather poor quality. On the contrary, our goal is to construct (a discrete approximation of) a measure $\mu$ in order to obtain a well distributed image measure $\nu$, as close as possible to the Lebesgue measure on $Y$. The proof of Theorem \ref{exist} is clearly non-constructive, since the image set $Y=F(X)$ is not a priori known. We then need a constructive method that provides the probability measure $\mu$ in the theorem above through an approximation procedure. This is the goal of next sections.

To further motivate our approach, let us recall the classical optimal transport problem raised by Monge \cite{monge}. Given probability measures $\eta, \nu$ on $Y$ and a cost function $c : Y \times Y \to [0,\infty]$ solve 
\begin{equation}
\inf_{T : Y\to Y} \left\{ \int_Y c(y,T(y)) d\mu(y) : T^\#\eta = \nu \right\}.
\label{eq:ot-monge}
\end{equation}
In the case the cost is simply given by the Euclidean distance squared $c(y_1,y_2)=\|y_1-y_2\|^2$, the infinimum in \eqref{eq:ot-monge} is called the Wasserstein 2-distance and is denoted $W_2^2(\eta,\nu)$. 

Our objective is to approximate the Lebesgue mesure in the image $Y=F(X)$ with a discrete set of points. In the case where the measure $\eta$ is discrete, given by a sum of Dirac masses $\eta_M = \sum_{i=1}^M a_i \delta_{y_i}$ then it is known that if $\eta_M$ solves the problem
\begin{equation}
\min \{W_2(\eta_M,\nu) : \#(\spt(\eta_M))\leq M\}
\label{eq:location-problem}
\end{equation}
the so-called \emph{location problem}, then the points $(y_i)_{i=1,...,M}$ correspond to a Centroidal Voronoi Tessellation on $Y$. For more details see \cite[Section 6.4.1, Box 6.6, Exercise 39]{Santambrogio}.

This motivates our approach, detailed in the following sections: Find sample points $(x_i)_{i=1,...,M}$ in $X$ such that their images $(F(x_i))_{i=1}^M \subset Y$ give the best representation of the Lebesgue measure on $F(Y)$ in the sense of \eqref{eq:location-problem}, i.e. $(F(x_i))_{i=1,...,M}$ form a Centroidal Voronoi Tesellation in the image $Y$.


\subsection{Centroidal Voronoi Tessellations}\label{sec:voronoi-algs}

Consider $D$ a compact connected region in $\Bbb{R}^d$ (typically a rectangular box). Given $M$ points $y_i\in D$, consider the associated Voronoi diagram consisting of a partition $(V_i)_{i=1}^M$ of $D$ such that
\begin{equation}\label{eq:voronoi}
V_i=\big\{y\in D\ :\ |y-y_i|\le|y-y_j|\text{ for every }1\le j\le M,\ j\ne i\big\}.
\end{equation}
In other words, $V_i$ contains all points in $D$ which are closer to $y_i$ compared to the remaining images $(y_j)_{j\ne i}$. Note that in our convention, Voronoi cells are bounded and are subsets of $D$. The Voronoi cells are, in general, different in volume and are not necessarily uniform, for a general distribution of points. See the example in Figure \ref{fig:voronoi-example} where a Voronoi diagram corresponding to $10$ random points in the square $[-1,1]^2$ is shown. The Voronoi points are represented with red squares and the centroids of the Voronoi cells are represented with blue points.
\begin{figure}
	\centering 
	\includegraphics[height=0.4\textwidth]{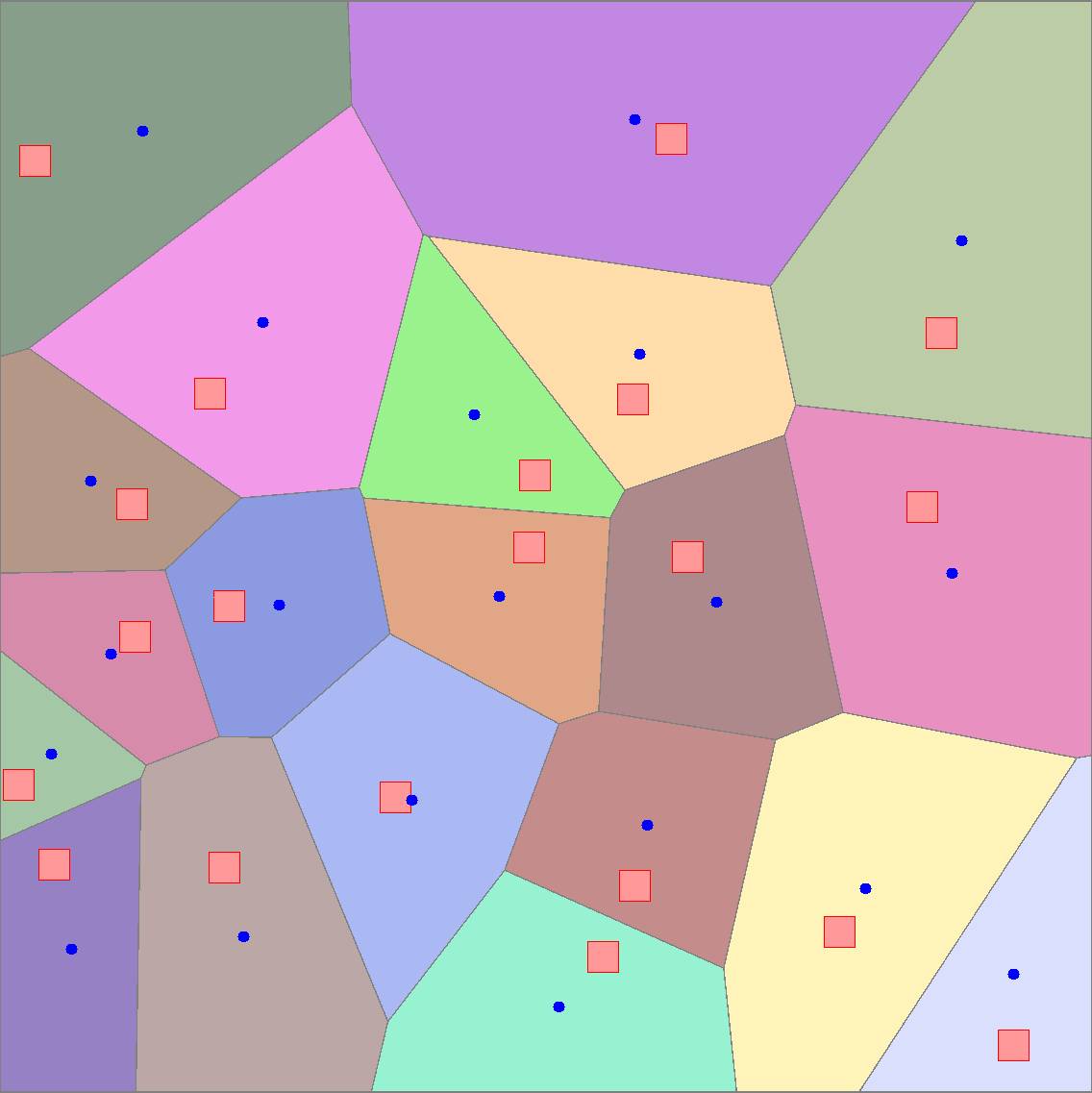}\qquad
	\includegraphics[height=0.4\textwidth]{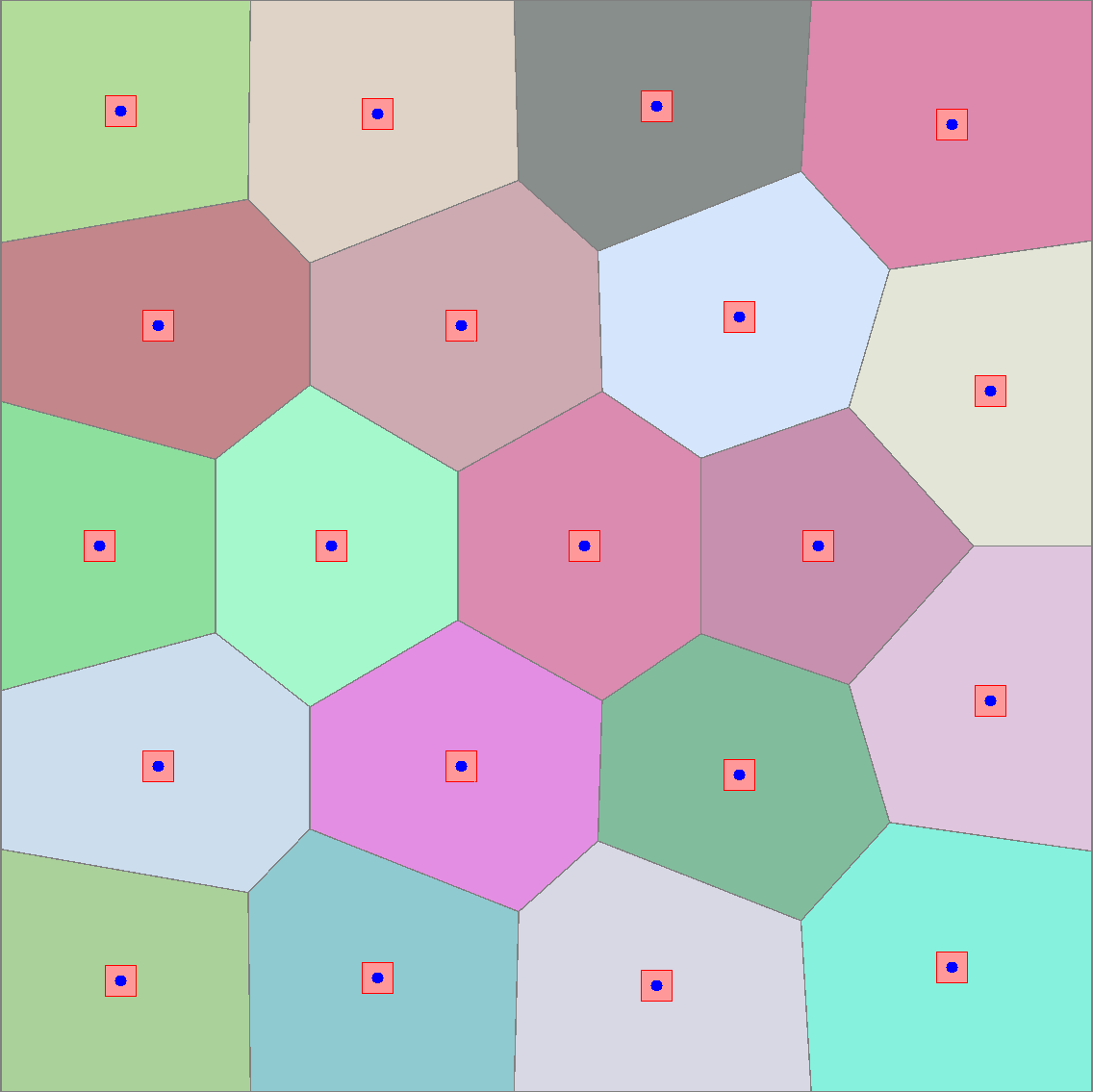}
	\caption{General Voronoi diagram (left) vs Centroidal Voronoi Diagram (right). The points $(y_i)$ are depicted by squares, the centroids of the Voronoi regions are depicted by dots.}
	\label{fig:voronoi-example}
\end{figure}

There exist, however, particular classes of Voronoi diagrams which have cells that are more uniform in size, called Centroidal Voronoi Tessellations (CVT). For such diagrams, the point $y_i$ which determines the Voronoi cell $V_i$ coincides with the centroid of the region $V_i$. An example of CVT is shown in Figure \ref{fig:voronoi-example} where it can be observed that the Voronoi points overlap with the centroids of the associated cells. The Voronoi regions for such a configuration have uniform sizes. Relevant usages of CVTs involve optimal quantization, data compression, optimal quadrature and mesh generation. We refer to \cite{du_emelianenko_acceleration, Du1999, cvt-levy, lloyd} for more details and references. There exist iterative algorithms that produce CVTs starting from general Voronoi diagram. the most basic one being Lloyd's algorithm, described as follows.

{\bf Lloyd's Algorithm.} For a prescribed number of iterations $M \geq 1$, successively replace the Voronoi points $y_i$ with the centroids $c_i$ of their corresponding Voronoi cells $V_i$. The algorithm was introduced in \cite{lloyd} and has a straightforward implementation. Convergence properties for Lloyd's algorithm are investigated in \cite{SabinGray}, \cite{Emelianenko2008}. While being easy to implement, this algorithm is not the most efficient from a practical point of view for reasons expressed in the following. 

In \cite{Du1999}, \cite{cvt-levy} it is shown that the Centroidal Voronoi Tessellation are critical points for the energy
\begin{equation}\label{eq:cvt-energy}
 G(y_1,..,y_M) = \sum_{i=1}^M \int_{V_i} |x-y_i|^2 dx.
\end{equation}
Indeed, an immediate computation which can be found, for example, in \cite{Du1999} shows that
\begin{equation}\label{eq:deriv-cvt}
\frac{\partial G}{\partial y_i}(y_1,...,y_M) = 2|V_i|(y_i-c_i), 1 \leq i \leq M
\end{equation}
where $c_i$ are the centroids of $V_i$, for $1 \leq i \leq M$. For a more direct identification of the gradient one may consider the following alternative formula:
\[\begin{split}
G(y_1,..,y_M) = &\int_D |x|^2\,dx+\sum_{i=1}^M\left(-2 y_i\cdot\int_{V_i} x\,dx + |y_i|^2 |V_i|\right)\notag\\
=&\int_D |x|^2\,dx+\sum_{i=1}^M |V_i|(|y_i|^2-2y_i\cdot c_i).
\end{split}\]
Notice that the quadratic term is constant and can be ignored, leading to a simplified energy function.

In view of the gradient formula \eqref{eq:deriv-cvt}, Lloyd's algorithm can be viewed as a gradient descent algorithm for minimising the energy \eqref{eq:cvt-energy} with no need for a step size control \cite{Du1999}. Acceleration techniques for improving the convergence of Lloyd's algorithm are considered in \cite{du_emelianenko_acceleration}, where a Newton algorithm is discussed. This proves efficient for a small number of Voronoi cells, but is computationally heavy for large $M$.

It is generally agreed in the numerical optimization practice that gradient descent algorithms have slow convergence for ill-conditioned problems \cite{nocedal-wright}. For large scale problems, quasi-Newton methods like the low memory BFGS algorithms (lbfgs) \cite[Chapter 7]{nocedal-wright} have better convergence properties. In \cite{cvt-levy} such a quasi-Newton method was used for minimizing \eqref{eq:cvt-energy}, providing a faster method for computing CVTs. It is well known that CVTs are not necessarily unique, but on the other hand CVTs obtained through a variational process have improved stability properties as already underlined in \cite{cvt-levy}. Thus an alternative way of finding CVTs is the following.

{\bf Variational CVT.} Minimize numerically the energy \eqref{eq:cvt-energy} using a quasi-Newton algorithm. Benchmarks presented in \cite{cvt-levy} show that such an algorithm is more efficient than Lloyd's Algorithm.

\subsection{Approximation of Blaschke-Santal\'o diagrams using CVTs}

The objective of this article is to provide numerical tools which allow the approximation of Blaschke-Santal\'o diagrams, i.e. the image of a mapping 
\begin{equation}
\label{eq:Fmap}
F: X \to \Bbb{R}^d
\end{equation}
where $X\subset \Bbb{R}^N$ is the set of admissible parameters, containing upper and lower bounds and other eventual constraints. 

Given $M$ a positive integer, consider a random set of samples $x_1,\dots,x_M\in X\subset\R^N$. As underlined in the introduction, the images $F(x_1),\dots,F(x_M)$ are not necessarily uniformly distributed in the image $F(X)\subset\R^d$. Our goal is to find a choice of the points $x_1,\dots,x_M$ in such a way that their images $F(x_1),\dots,F(x_N)$ are uniformly distibuted inside the image set $F(X)$.

In the following, we assume $F(X)$ is bounded and consider $D$ a bounding box containing $F(X)$ strictly in its interior. Denote by $y_i=F(x_i)$ the images for the initial sampling. We obviously have $y_i\in D$. Consider now the Voronoi diagram associated to the points $y_i$ defined by \eqref{eq:voronoi}. Since our goal is to obtain a more uniform distribution of the images, we search points $y_i=F(x_i)$ which produce a Voronoi diagram that is as close as possible to a CVT. Inspired from the results recalled in Section \ref{sec:voronoi-algs} we propose two algorithms for approximating Blaschke-Santal\'o diagrams.

{\bf Lloyd algorithm with projection.} Lloyd's algorithm is simple to implement, for general CVTs, simply replacing the points with the corresponding centroids. When dealing with BS diagrams one would like, for each sample $x_i$, $i=1,...,M$ to replace it with another admissible sample $\bar x_i$ such that $F(\bar x_i)$ is the centroid of the Voronoi region associated to $F(x_i)$. We are faced with two issues:
\begin{enumerate}[label=(\alph*)]
	\item Given a point $c$ in the image $F(X)$, find a sample realizing $c$, i.e. find $x\in X$ such that $F(x) = c$.
	\item Given a general point $c\in \Bbb{R}^d$ which may not be in the image $F(X)$, find $x \in X$ such that $F(x)$ is \emph{closest} to $c$ in a sense to be defined.
\end{enumerate}
Both aspects enumerated above can be covered using a single optimization problem:
\begin{equation}\label{eq:inverse-sample}
\min_{ x \in X} \|F(x)-c\|.
\end{equation}
In cases where $X$ is a compact set and $F$ is at least of class $C^1$ problem \eqref{eq:inverse-sample} admits solutions and efficient approximations can be found using standard numerical optimization algorithms. 

We are thus lead to the following natural algorithm.

\begin{algo}[Lloyd Blaschke-Santal\'o]
	
	Input: number of samples $M$, number of iterations $q$, choose a bounding box $D$ for the image $F(X)$, tolerance $\varepsilon>0$. 
	
	Initialization: generate $M$ random samples $x_i \in X$, $i=1,...,M$
	
	Loop: For each one of the $q$ iterations do:
	\begin{itemize}
		\item Compute $y_i = F(x_i)$
		\item Compute the Restricted Voronoi Diagram $(V_i)_{i=1}^M$ associated to the points $y_i$. Compute the centroids $c_i$ of the regions $V_i$, $i=1,...,M$.
		\item For each one of the $c_i$ solve \eqref{eq:inverse-sample} and replace $x_i$ with the numerical solution $\bar x_i$.
		\item If for every $i=1,...,M$ we have $\|F(x_i)-F(\bar x_i)\|<\varepsilon$ stop.
	\end{itemize}
\label{algo:lloyd}
\end{algo}

In our implementation and in the sequel, the norm considered in problem \eqref{eq:inverse-sample} is the Euclidean one. However other choices are possible and may give different behavior for the algorithm. Supposing Algorithm \ref{algo:lloyd} converges for a given threshold $\varepsilon$ we obtain a configuration of samples $(x_i)_{i=1}^M$ such that:
\begin{itemize}
	\item Whenever $x_i$ is such that the centroid $c_i$ of $V_i$ belongs to $F(X)$ we have $\|F(x_i)-c_i\|<\varepsilon$.
	\item Whenever $x_i$ is such that the centroid $c_i$ of $V_i$ does not belong to $F(X)$ we have $\|\Proj_{F(X)}(c_i) -F(x_i)\|<\varepsilon$. We used the classical notation for the projection operator $\Proj_S(y) = \{z \in S : \|z-y\| \text{ is minimal}\}$. In particular, $F(x_i)$ will be a boundary point for $F(X)$.
\end{itemize}

The drawbacks of Algorithm \ref{algo:lloyd} are similar to the ones of Lloyd's algorithm compared to the variational CVT. One may interpret Algorithm \ref{algo:lloyd} as a fixed point or gradient descent algorithm with projection. Such algorithms can be improved using quasi-Newton methods as described in the following.

{\bf Variational CVT for Blasche-Santal\'o diagrams}. Similar to \cite{cvt-levy} we formulate a minimization problem. We propose to minimize the composition of \eqref{eq:cvt-energy} with the parametrization \eqref{eq:Fmap}. For a given number of samples $M$ we consider the functional $H : X^M \subset (\Bbb{R}^N)^M \to \Bbb{R}$ given by
\begin{equation}\label{eq:functional-composition}
H(x_1,\dots,x_M) = G(F(x_1),F(x_2),\dots,F(x_M)),
\end{equation}
with $G$ defined in \eqref{eq:cvt-energy}. In practice, we minimize $H$ using quasi-Newton methods with eventual bound and linear constraints characterizing the parameter set $X$. This type of problems can easily be handled using available implementations (\texttt{fmincon} in Matlab, Knitro see \cite{byrd2006k}). Assuming the function $F$ defined in \eqref{eq:Fmap} is differentiable, the derivatives of $H$ can be expressed with
\begin{equation}\label{eq:gradH}
\frac{\partial H}{\partial x_i} = DF^T(x_i) \frac{\partial G}{\partial y_i}(F(x_1),\dots,F(x_N))=2|V_i| DF^T(x_i)(F(x_i)-c_i),
\end{equation}
where $DF(x_i) \in \Bbb{R}^{d\times n}$ is the Jacobian of $F$ evaluated at $x_i$ and $c_i$ is the centroid of the Voronoi region $V_i$, $i=1,...,M$. We arrive at the following algorithm.

\begin{algo}[Variational CVT Blaschke-Santal\'o]	\label{algo:var-cvt}
	
	Input: number of samples $M$, number of iterations $q$, choose a bounding box $D$ for the image $F(X)$, tolerance $\varepsilon>0$. 
	
	Initialization: generate $M$ random samples $x_i \in X$, $i=1,...,M$
	
	Optimization: minimize \eqref{eq:functional-composition} using gradient information given by \eqref{eq:gradH}:
	\[ \min_{(x_i) \in X^M} H(x_1,...,x_M).\]
\end{algo}

Minimizing $H$ on $X^M$ will produce images $F(x_i)$, $i=1,\dots,M$ that are equidistributed in $F(X)$ in the following sense.

\begin{prop}Assume the parameter set $X\subset \Bbb{R}^N$ is compact and $F$ is $C^1$. Suppose $x_1^*,\dots,x_M^*$ minimizes \eqref{eq:functional-composition} on $X^M$ and denote by $y_i^* = F(x_i^*)$, $i=1,\dots,M$ the corresponding images. If $x_i^*$ is an interior point of $X$ and $DF(x_i^*)$ is of full rank, then $y_i^*$ is the centroid of the Voronoi cell associated to $x_i^*$.

\end{prop}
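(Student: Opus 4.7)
The plan is to exploit the interior first-order optimality condition together with the explicit gradient formula \eqref{eq:gradH}. Since $x_i^*$ is an interior point of $X$ and $(x_1^*,\dots,x_M^*)$ is a minimizer of $H$ over $X^M$, the partial gradient in the $i$-th block must vanish at $x^*$; that is,
\[
\frac{\partial H}{\partial x_i}(x_1^*,\dots,x_M^*) \;=\; 0 \in \R^N.
\]
Substituting \eqref{eq:gradH} yields
\[
2|V_i^*|\, DF(x_i^*)^T\bigl(F(x_i^*)-c_i^*\bigr) \;=\; 0,
\]
where $V_i^*$ is the Voronoi cell of $y_i^*=F(x_i^*)$ inside the bounding box $D$ and $c_i^*$ is its centroid.

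Next I would argue that $|V_i^*|>0$, which lets us cancel the scalar factor. The setup places $F(X)$ strictly in the interior of $D$, so $y_i^*\in\mathrm{int}(D)$; hence a sufficiently small ball around $y_i^*$ lies in $D$, and its intersection with the intersection of half-spaces $\{y:|y-y_i^*|\le|y-y_j^*|\}$ contains at least a cone of positive $d$-dimensional Lebesgue measure, giving $|V_i^*|>0$. Therefore
\[
DF(x_i^*)^T\bigl(F(x_i^*)-c_i^*\bigr) \;=\; 0.
\]

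Finally, since $DF(x_i^*)\in\R^{d\times N}$ has full rank $d$, its transpose $DF(x_i^*)^T\in\R^{N\times d}$ has trivial kernel (its columns are linearly independent). The vector $F(x_i^*)-c_i^*\in\R^d$ then must be zero, which is exactly the statement that $y_i^*$ coincides with the centroid $c_i^*$ of $V_i^*$.

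The entire argument is short and mostly bookkeeping; the only genuinely substantive point is the positivity of $|V_i^*|$, which is what allows one to pass from the vanishing of the full gradient to the vanishing of $DF(x_i^*)^T(y_i^*-c_i^*)$. Beyond that, the full-rank assumption on $DF(x_i^*)$ is precisely what is needed so that the pullback by $DF(x_i^*)^T$ does not lose information about the displacement $y_i^*-c_i^*$ in the image space.
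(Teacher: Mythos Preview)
Your proof is correct and follows the same approach as the paper's: interior optimality gives $\partial H/\partial x_i=0$, then \eqref{eq:gradH} together with the full-rank hypothesis forces $F(x_i^*)=c_i^*$. The only addition is your justification that $|V_i^*|>0$, which the paper leaves implicit; your argument for this is fine (in fact the Voronoi cell contains a full ball around $y_i^*$, not merely a cone, since each half-space $\{|y-y_i^*|\le|y-y_j^*|\}$ with $y_j^*\neq y_i^*$ contains $y_i^*$ in its interior).
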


\begin{proof}
If $x_i^*$ is an interior point for $X$ then $\frac{\partial H}{\partial x_i} = 0$. In view of \eqref{eq:gradH}, if $DF(x_i^*)$ is of full rank, then $F(x_i^*)=c_i$, i.e. $F(x_i^*)$ is the centroid of the region $V_i$.
\label{prop:optim}
\end{proof}

\begin{rema}
In practice, if Proposition \ref{prop:optim} does not apply, we may have the following situations.
\begin{itemize}
\item[(a)]If $F(x_i)$ is a boundary point for $F(X)$ then $F(x_i)$ is not necessarily equal to the centroid $c_i$ of the region $V_i$.
\item[(b)]Interior points of $F(X)$ for which the Jacobian $DF$ is not of full rank may act as boundary points. We observe this behavior in the numerical simulations.
\end{itemize}
\end{rema}

The minimization of the functional \eqref{eq:functional-composition} is straightforward if the Voronoi diagram associated to a set of points can be computed. We use the routine \texttt{compute\_RVD} developed following the results in \cite{cvt-levy} from the library Geogram.
\begin{center}
\href{https://github.com/BrunoLevy/geogram}{\nolinkurl{https://github.com/BrunoLevy/geogram}}
\end{center}
The optimization is performed in Matlab/Julia using \texttt{fmincon} or the Artelys Knitro software in Algorithm \ref{algo:var-cvt}. Details regarding the optimization procedure and more specific aspects regarding the problem at hand are shown in the next section.

\section{Application I: algebraic functions}\label{sec:matrix}

We start with an algebraic example which is easy to state, but quickly becomes challenging. For $d\ge2$ consider the space $\Sym_d([-1,1])$ of symmetric $d\times d$ matrices with real entries in the interval $[-1,1]$. We apply our algorithm to the study of the $(\tr,\det)$ diagram ($\tr$ denotes the trace, $\det$ denotes the determinant). More precisely, consider the application $F:\Sym_d([-1,1]) \to \Bbb{R}^2$ defined by
\begin{equation}
 F(A) = (\tr(A),\det(A)).
 \label{eq:trace-det}
\end{equation}
Our goal is to identify the image $J(\Sym_d([-1,1]))$ for some particular choice of $d$.

While for $d=2$ a complete analytical description of the diagram is possible, for $d\geq 3$ the problem becomes challenging. On the other hand, the numerical method we propose is efficient and shows a clear description of the corresponding diagram.

We represent symmetric matrices of size $d\times d$ as a vector in $\Bbb{R}^{d(d+1)/2}$, the concatenation of the diagonals $j-i=0,1,...,d-1$. With this convention the gradient of the trace is equal to
\[ \nabla \tr(A) = (1,1,...,1,0,0...0),\]
where the first $d$ elements are zero. Therefore the jacobian matrix $DF(A)$ has rank at least $1$. Partial derivatives of the determinant with respect to the entries of the matrix are components of the adjugate matrix $\adj(A)$. The elements of $\adj(A)$ on position $(j,i)$ are equal to $(-1)^{i+j}$ times the $(i,j)$ minor of $A$, the determinant of the matrix obtained from $A$ when removing the $i$-th line and $j$-th column. In particular $A \cdot \adj(A) = \det(A) I$. 

As a consequence, the Jacobian $DF(A)$ has rank one if and only if the matrix $\adj(A)$ is a multiple of the identity. Then $\det(A)=0$ or $A$ is also a multiple of the identity. In particular, the Jacobian matrix $DF(A)$ is of rank $1$ for all diagonal matrices. Next, we consider in detail the case $d=2$ for which an analytic description of the $(\tr,\det)$ diagram is available.

{\bf Analysis of the two dimensional case.} For $d=2$ we obviously have $\tr(A) \in [-2,2]$ with extremal values attained when diagonal elements are all equal to $\pm 1$. This shows that the diagram is contained in $[-2,2]\times \Bbb{R}$.

Consider $A =\begin{pmatrix}
a&c \\ c& b
\end{pmatrix}$ and fix $q = a+b$. Then
$$\det(A) = ab-c^2\leq ab \leq \frac{1}{4}(a+b)^2 \leq q^2/4.$$
Thus, the upper part of the diagram is the curve $[-2,2]\ni q \mapsto q^2/4$.

We also have $ab = a(q-a)$ which is a concave function on $[\max\{q-1,-1\},\min\{q+1,1\}]$. The minimum is attained at one of the endpoints of the interval. Investigating this minimum with respect to $q \in [-2,2]$ we find that the lower bound of the diagram is given by $q\mapsto |q|-2$. The Jacobian of $A$ with respect to variables $a,b,c$ is singular if and only if it is diagonal, corresponding to the upper bound $q \mapsto q^2/4$.

{\bf Illustration of the numerical algorithms.} In the following we apply the algorithms proposed in Section \ref{sec:framework} to study the proposed diagram. The bounding boxes $D$ are considered as follows:
\begin{itemize}
\item $d=2$: $D= [-2.5,2.5] \times [-2.5,2.5]$
\item $d=3$: $D= [-5,5]\times [-5.5]$
\item $d=4$: $D= [-6,6]\times [-20,20]$.
\end{itemize}
We apply Algorihm \ref{algo:lloyd} for $M=200$ samples, with a maximum number of $q=1000$ iterations and a tolerance $\varepsilon=10^{-4}$. The initial samples are randomly chosen with values in $[-1,1]^M$. Results are shown in Figure \ref{fig:single-lloyd} and all simulations finished before the maximum number of iterations was attained. We have the following observations:
\begin{itemize}
\item The proposed method successfully approximates the Blaschke-Santal\'o diagrams even when using a rather small number of samples. At the end of the iterative process the images of the samples are uniformly distributed in the images $Y_d=F(\Sym_d([-1,1]))$.
\item Images of samples that are in the interior of the diagram $Y_d$ are close to the center of gravity of the corresponding Voronoi cell.
\item Images $y_i$ of samples lying on the boundary of $Y_d$ correspond to Voronoi cells $V_i$ where the distance between $y_i$ and the corresponding centroid $c_i$ is large.
\end{itemize}

\begin{figure}
	\centering
	\includegraphics[height=0.45\textwidth]{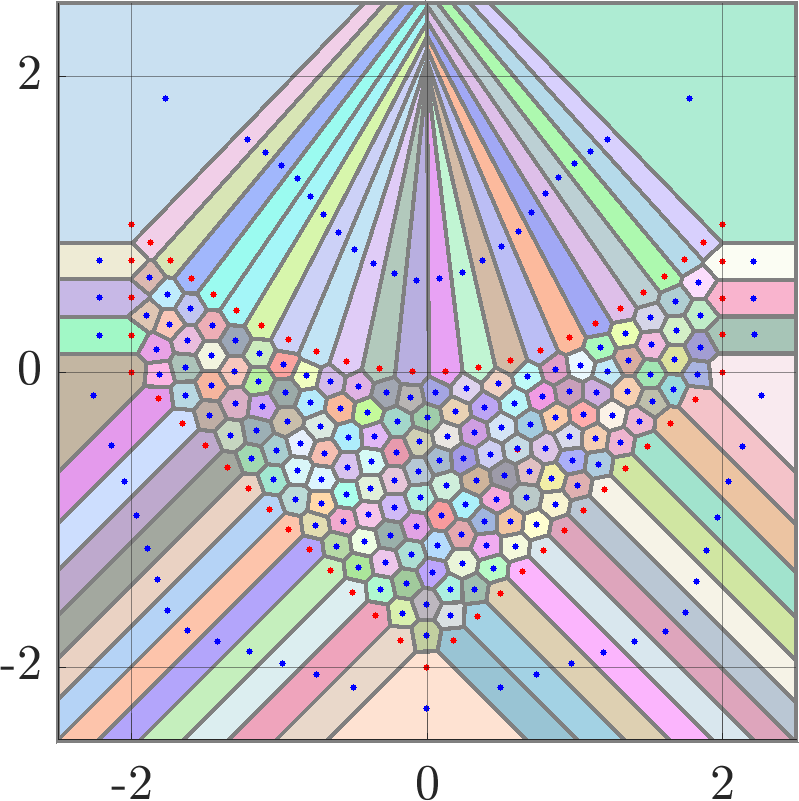}\quad
	\includegraphics[height=0.45\textwidth]{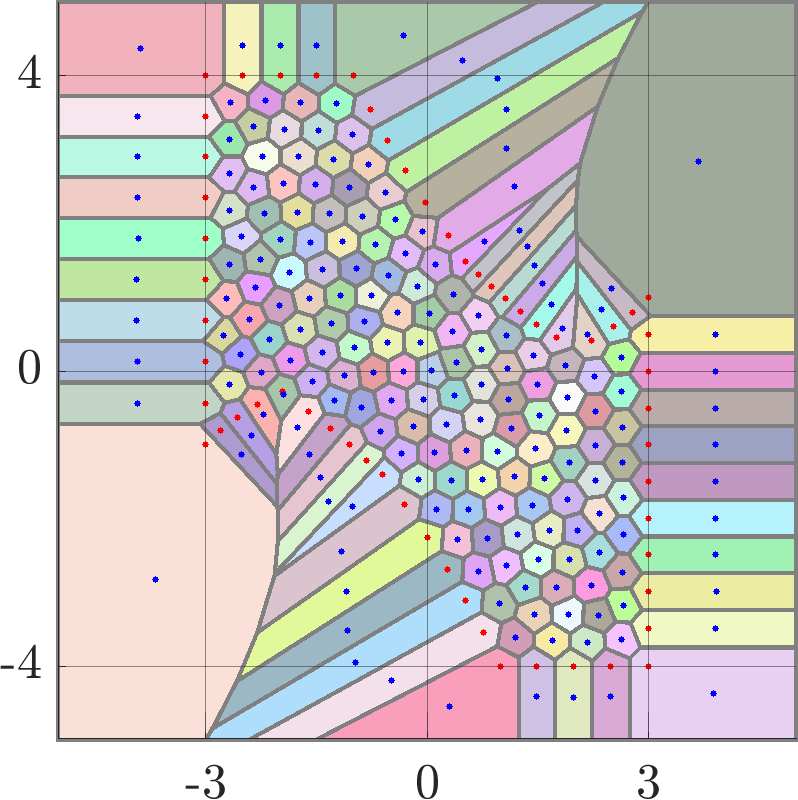}\quad
	\caption{Approximations of the $(\tr,\det)$ diagram in $\Sym_d([-1,1])$ for cases $d\in \{2,3\}$ using Algorithm \ref{algo:lloyd}.}
	\label{fig:single-lloyd}
\end{figure}

Next we apply Algorithm \ref{algo:var-cvt} for $M=200$ samples. The constrained numerical optimization problem is executed using the software Artelys Knitro with the \texttt{active-set} algorithm. The initial samples are randomly chosen with values in $[-1,1]^M$. The number of iterations is limited to $1000$ and the optimality criterion tolerance is set to $10^{-8}$. All simulations reached the maximum number of iterations under these constraints. Results are shown in Figure \ref{fig:single-cvt}. Similar observations can be underlined, recalling again that the diagrams are approximated remarkably well with $200$ well distributed samples. 

The function \eqref{eq:cvt-energy} is evaluated for the optimal configuration of both algorithms and shown in Table \ref{tab:comparison-energy}. It is clearly observed that Algorithm \ref{algo:var-cvt} provides a lower energy since this algorithm is focused on decreasing the objective value. Moreover, the use of quasi-Newton descent directions (compared to anti-gradient descent direction for Lloyd) is known to accelerate the convergence. On the other hand Lloyd's algorithm can make large changes in the optimization variables (each sample is replaced by another one closest to the corresponding centroid as possible) which is advantageous when applied to initial random configuration or to regions where \eqref{eq:cvt-energy} varies too slowly.

Concerning the computational cost, Algorithm \ref{algo:lloyd} is more costly (in our direct implementation), possibly due to the large number of small size optimization problems \eqref{eq:inverse-sample} that need to be solved at each iteration. Algorithm \ref{algo:var-cvt} simply computes one Restricted Voronoi Diagram per iteration (global or line-search) and evaluates the associated cost function and its gradient.

\begin{figure}
	\includegraphics[height=0.45\textwidth]{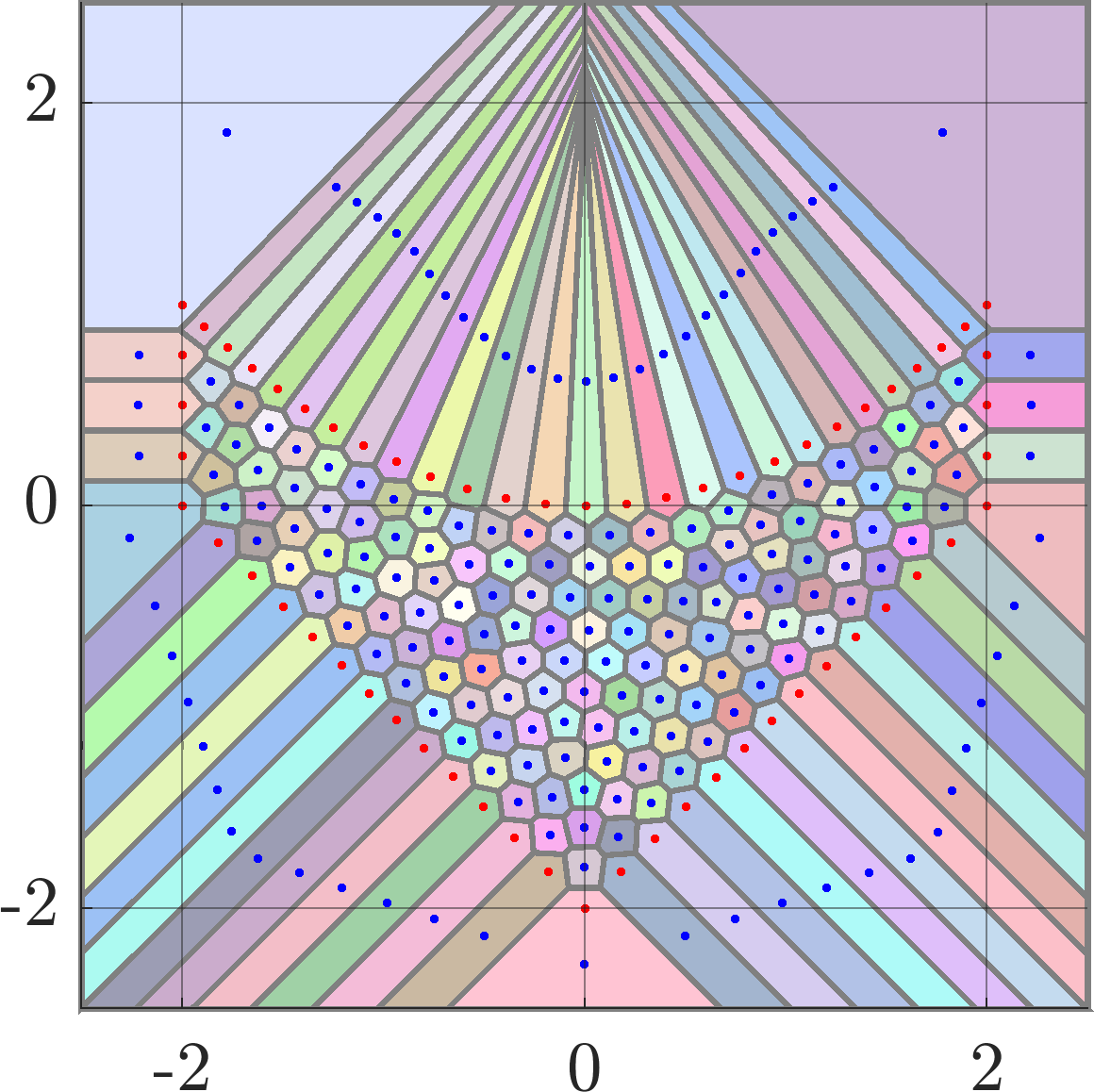}\quad
	\includegraphics[height=0.45\textwidth]{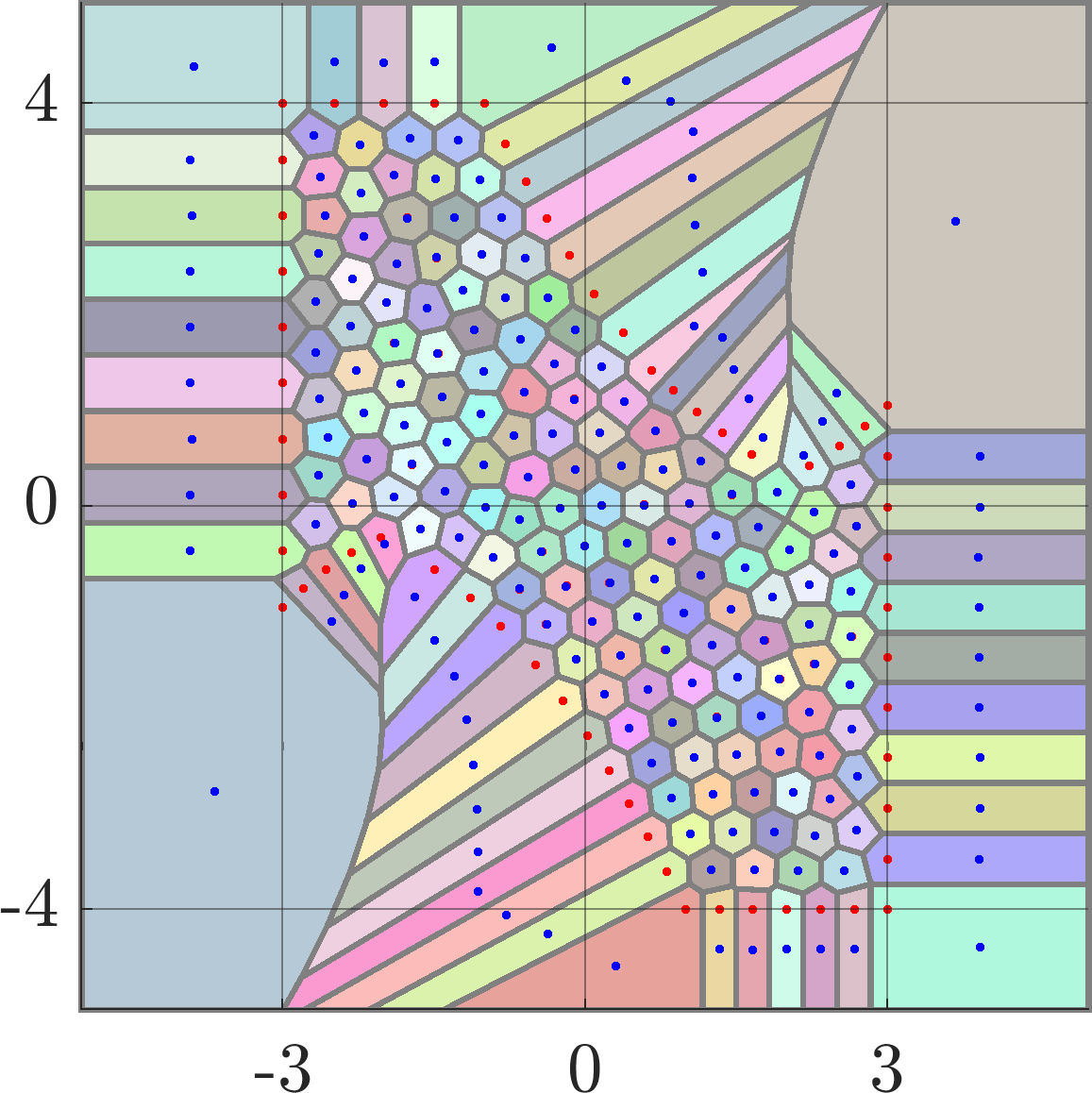}
	\caption{Approximations of the $(\tr,\det)$ diagram in $\Sym_d([-1,1])$ for cases $d\in \{2,3\}$ using Algorithm \ref{algo:var-cvt}.}
	\label{fig:single-cvt}
\end{figure}

\begin{table}
	\centering
	\begin{tabular}{|c|c|c|}
		\hline
     & Algorithm \ref{algo:lloyd} & Algorithm \ref{algo:var-cvt} \\ \hline
    $d=2$ & $19.249278$ & $19.246159$\\ \hline
    $d=3$ & $218.595970$ & $218.479052$ \\ \hline
    $d=4$ & $2359.898079$ & $2356.123844$ \\ \hline
	\end{tabular}
	\caption{Comparison of the energies \eqref{eq:cvt-energy} after $1000$ iterations of Algorithms \ref{algo:lloyd}, \ref{algo:var-cvt} for the final configurations shown in Figures \ref{fig:single-lloyd}, \ref{fig:single-cvt}.}
	\label{tab:comparison-energy}
\end{table}

Applying the proposed algorithms directly for a large number of samples is rather inefficient, convergence being slow. In the following we propose a multi-grid strategy that accelerates convergence.

\section{Practical aspects: re-centering, multi-grid}

In many numerical applications, multi grid strategies are employed to accelerate simulations. An initial simulation is performed on a coarse grid. Then the discretization is refined and the current numerical optimizer is interpolated on the new grid and used as an initialization for a new optimization procedure. The refinement procedure is repeated until de desired precision is attained. 

We intend to use a similar strategy described below. Given a point $y = F(x) \in F(X)$ we search for other points $(z_i)_{i=1}^k$ in $X$ such that $F(z_i)$ is \emph{close} to $F(x)$ for $i=1,...,k$. A natural idea is to find an ellipsoid around $x$ which is mapped onto a $d$-dimensional sphere around $F(x)$. However, if $x$ is a boundary point for $F(X)$ it is not possible to find such an ellipsoid. Nevertheless, if the dimension $N$ of the space of parameters is larger than the dimension $d$ of the image, the Jacobian $DF$ will be most likely non-singular, thus allowing us to move the point in the interior of the constraint set $X$ while preserving the same image. This idea is detailed below.

\subsection{\bf Re-centering procedure.} Suppose $X = \prod_{i=1}^N [a_i,b_i]$, and $x_0 \in X$, $y_0=F(x_0)$ are such that $DF(x_0)$ is of full rank. The implicit function theorem implies that $\{F(x)=y_0 : x \in X\}$ is, locally around $x_0$, a $C^1$ hypersurface of dimension $N-d$. Supposing $N>d$ (always the case in our applications), if $\ker F(x_0)$ is not contained in the tangent space to $\{F(x)=y_0\}$ at $x_0$, then it is always possible to find $x$ in the interior of $X$ such that $F(x)=F(x_0)$. In particular, if $x_0$ is on the boundary of $X$, we can find another element in the interior of $X$, having the same image. 

In practice, we solve a problem of the form
\begin{equation}\label{eq:re-center}
\min_{ F(x)=F(x_0)} \left(\sum_{i=1}^N (x^i-0.5(a_i+b_i))^p\right)^{1/p}.
\end{equation}
Generic available software like \texttt{fmincon} in Matlab allows the implementation of the nonlinear constraint $F(x)=F(x_0)$. The power $p$ is chosen large enough ($p=10$ in practice) such that the maximal difference between the coordinates of $x$ and coordinates of the center of $X$ becomes as small as possible. Problems \eqref{eq:re-center} are computationally cheap for the algebraic application proposed previously. 

\subsection{\bf Multi-grid procedure.} Algorithms \ref{algo:lloyd} and \ref{algo:var-cvt} proposed in Section \ref{sec:framework} can converge slowly when the number of samples is large. This is especially problematic when random samples tend to concentrate mostly in some particular regions of the Blaschke-Santal\'o diagram. Thus we are interested in ways of enriching the set of samples given by Algorithms \ref{algo:lloyd} or \ref{algo:var-cvt} for a rather small initial number of samples. We propose two refinement procedures below.

{\bf (a) Spheres around current samples.} Suppose $N>d$. For a point $x_0$ such that $DF(x_0)$ is not singular, we re-center it using \eqref{eq:re-center}. We start by computing the singular value decomposition
\[ DF(x_0) = USV^T,\]
with $U \in \Bbb{R}^{d\times d}$, $S \in \Bbb{R}^{d\times N}$, $V \in \Bbb{R}^{N\times N}$. Matrices $U$ and $V$ are unitary and $S$ is diagonal, containing the singular values on the diagonal. Assuming $DF(x_0)$ is not singular, the diagonal values in $S$ are non-zero. Moreover, if $(u_i)_{1\leq i \leq d}, (v_j)_{1 \leq j \leq N}$ are columns if $U$ and $V$, respectively and $s_i$, $1\leq i \leq d$ are the singular values, we have the decomposition
\[ DF(x_0) = \sum_{i=1}^d s_i u_i v_i^T.\]
Denote by $\overline V$ the matrix containing the first $d$ columns of $V$ as columns and $\overline S$ the $d\times d$ diagonal matrix containing $1/s_i$ on its diagonal.
Consider the vectors $w_i$, $1\leq i \leq d$ which are columns of $ \overline V\cdot\overline S\cdot U$. Then these vectors verify $DF(x_0)w_i = e_i$, $i \leq i \leq d$ where $(e_i)_{i=1,...,d}$ is the canonical basis. Consider the matrix $W$ containing $w_i$, $i=1,...,d$ as columns. For $k\geq 3$ consider $k$ uniformly distributed points $(\theta_j)_{j=1}^k$ in $\Bbb{S}^{d-1}$ and consider points $z_j= x_0 + rW\theta_j$, where $r>0$ is a radius small enough. In practice $r$ is equal to one third of the minimal distance among images $F(x_i)$. Among points $z_j$ we select only those that belong to $X$. If $x_0$ is a boundary point for $F(X)$ it is possible that only a few of the points $z_j$, $j=1,...,d$ are admissible.

Whenever a refinement is necessary, the procedure described above is repeated for every sample point $(x_i)_{i=1}^M$ for which the singular values of $DF(x_i)$ are above a certain threshold ($10^{-3}$ in our implementation). 

In Figure \ref{fig:example-ref} an example of application of the methods described above is shown. We take a result given by Algorithm \ref{algo:var-cvt}. At the end of the optimization process some samples may have images at the boundary of $X$. Applying directly the multi-grid procedure, trying to add points on a circle around the current samples gives the result shown in the left picture in Figure \ref{fig:example-ref}. For some points in the interior of $F(X)$ the algorithm fails to add the required number of points since the associated samples are on the boundary of $X$. In the right picture, re-centering is performed before applying the refinement procedure. For all interior points of $F(X)$ the algorithm manages to add the prescribed number of additional samples with images close to the previous ones. It can be observed that for the upper boundary, corresponding to a singular Jacobian in this case, no points are added, since the procedure described above cannot be applied.

\begin{figure}
	\centering
	\fbox{\includegraphics[width=0.45\textwidth]{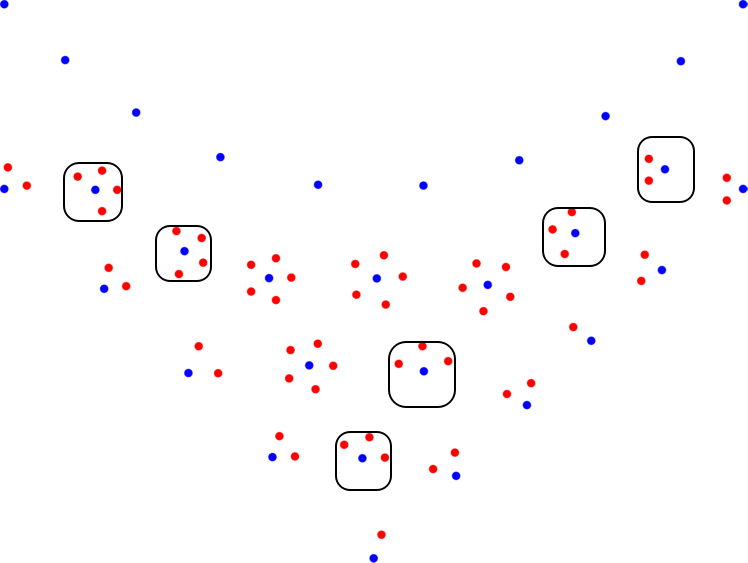}}\qquad
	\fbox{\includegraphics[width=0.45\textwidth]{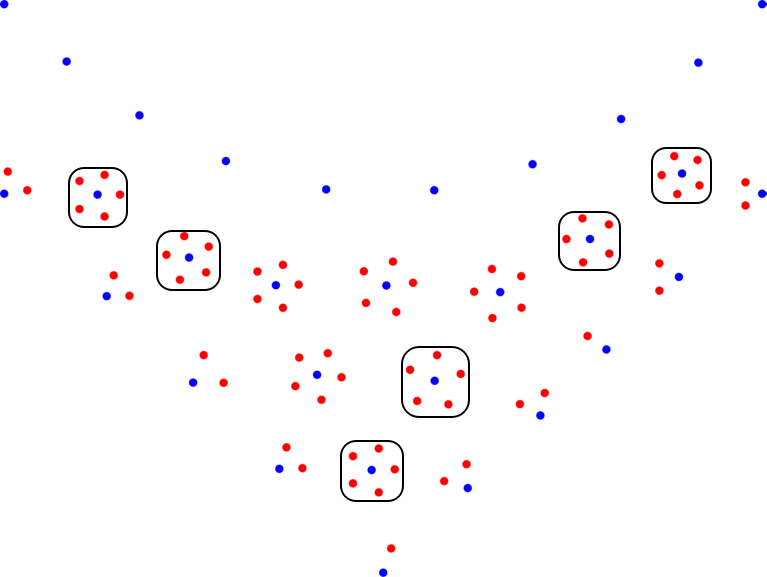}}
	\caption{Refinement procedure: application before re-centering fails to add the required number of points around all interior points in the image. (left) After re-centering, the multi-grid procedure succeeds for all interior points. (right)}
	\label{fig:example-ref}
\end{figure}

{\bf (b) Delaunay Triangulations.} These triangulations are closely related to Voronoi diagrams and their computation is standard in computational geometry. The Delaunay triangulation is the dual graph of the Voronoi diagram, the circumcenters of the Delaunay triangles being the vertices of the Voronoi diagram. 

\begin{figure}
	\includegraphics[width=0.5\textwidth]{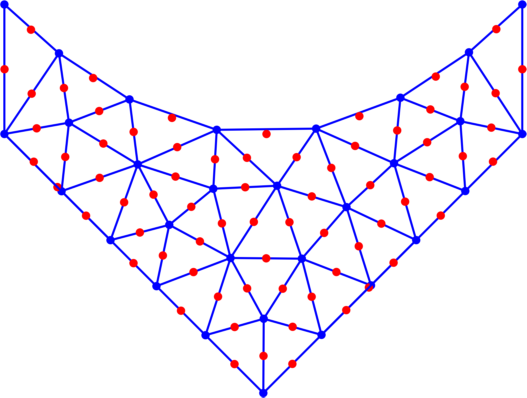}
	\caption{Refinement strategy using projections of midpoints of certain edges of the triangles in the Delaunay triangulation.}
	\label{fig:delaunay}
\end{figure}

Given a set of samples $(x_i)_{i=1}^M$ and the corresponding images $y_i=F(x_i)$, $i=1,...,M$, start by computing the Delaunay triangulation $\mathcal T$ of $(y_i)_{i=1}^M$. Next, select all edges of triangles in $\mathcal T$. For each such edge, take the midpoint $y_m$ and solve the problem 
\begin{equation}\label{eq:projection}
\min_{x \in X} \|F(x)-y_m\|.
\end{equation}
If the minimization \eqref{eq:projection} succeeds, the solution $x_m$ is a sample for which $F(x)=y_m$. If $y_m$ does not belong to $F(X)$ then the minimization problem still produces a sample, as close as possible to $y_m$. There are two issues that motivate us to solve \eqref{eq:projection} only for particular edges in $\mathcal T$.
\begin{itemize}
	\item When $F(X)$ is non-convex, the Delaunay triangulation $\mathcal T$ will contain triangles which are not contained in $F(X)$. Usually, such triangles have an obtuse angle, some sides being significantly larger than the others.
	\item Some regions may contain a denser concentration of samples than others. Therefore, for edges with length too small compared to the average, we choose not to add the corresponding midpoints to the diagram.
\end{itemize}
In practice we compute the average length $\ell$ of sides of triangles in $\mathcal T$ and we solve \eqref{eq:projection} for midpoints of edges with length in $[0.5\ell,1.5\ell]$. Figure \ref{fig:delaunay} shows the outcome of this refinement procedure for the same test case as the one showin in Figure \ref{fig:example-ref}. 

\subsection{\bf Global algorithm and numerical examples.} Taking the previous considerations into account leads us to the following practical approach. In particular, we combine the advantages of Algorithms \ref{algo:lloyd}, \ref{algo:var-cvt} and the refinement strategy. Algorithm \ref{algo:var-cvt} is ran until convergence criteria are met or the maximal number of iterations is reached.

\begin{algo}[Blaschke-Santal\'o multi-grid approximation]
	Inputs: $M$ - number of initial samples, maximal number of iterations $q_1 \in [20,100]$ for Algorithm \ref{algo:lloyd} and $q_2\in [1000,2000]$ for Algorithm \ref{algo:var-cvt}, number of refinements $n_{\reff}$, number of points to add around each sample at refinement $n_{\addd}$.
	
	Initialization: Choose $M$ random samples in $X$. Run Algorithm \ref{algo:lloyd} for $q_1$ iterations followed by Algorithm \ref{algo:var-cvt}.
	
	For $k=1$ to $n_{\reff}$ do:
	\begin{itemize}
		
		\item Multigrid: do one of the following. 
		\begin{itemize}
			\item Apply the re-centering procedure, solving \eqref{eq:re-center} for each one of the resulting samples. Apply the first refinement procedure adding at most $n_{\addd}$ points around each sample. 
			\item Alternatively, use the midpoints of the triangles in the Delaunay triangulation to find new sample points.
		\end{itemize}
		\item Run Algorithm \ref{algo:lloyd} for $q_1$ iterations.
		\item Run Algorithm \ref{algo:var-cvt}.
	\end{itemize}
\label{algo:global-ref}
\end{algo}
\vspace{0.2cm}

Running a few iterations of Algorithm \ref{algo:lloyd} before running Algorithm \ref{algo:var-cvt} is motivated by the fact that Lloyd's algorithm can make large jumps when applied to a non-optimal initial configuration.

In the following we show how Algorithm \ref{algo:global-ref} approximates Blascke-Santal\'o diagrams in practice. First we apply it for the $(\tr,\det)$ diagram in the case $d=2$. We start with a set of $30$ samples and we perform three refinements. The simulations have $30$, $104$, $464$ and $2423$ samples, respectively. Initialization and the results of the successive stages of the algorithm are shown in Figure \ref{fig:global-ref-mat-2}.

\begin{figure}
	\centering
	\includegraphics[width=0.3\textwidth]{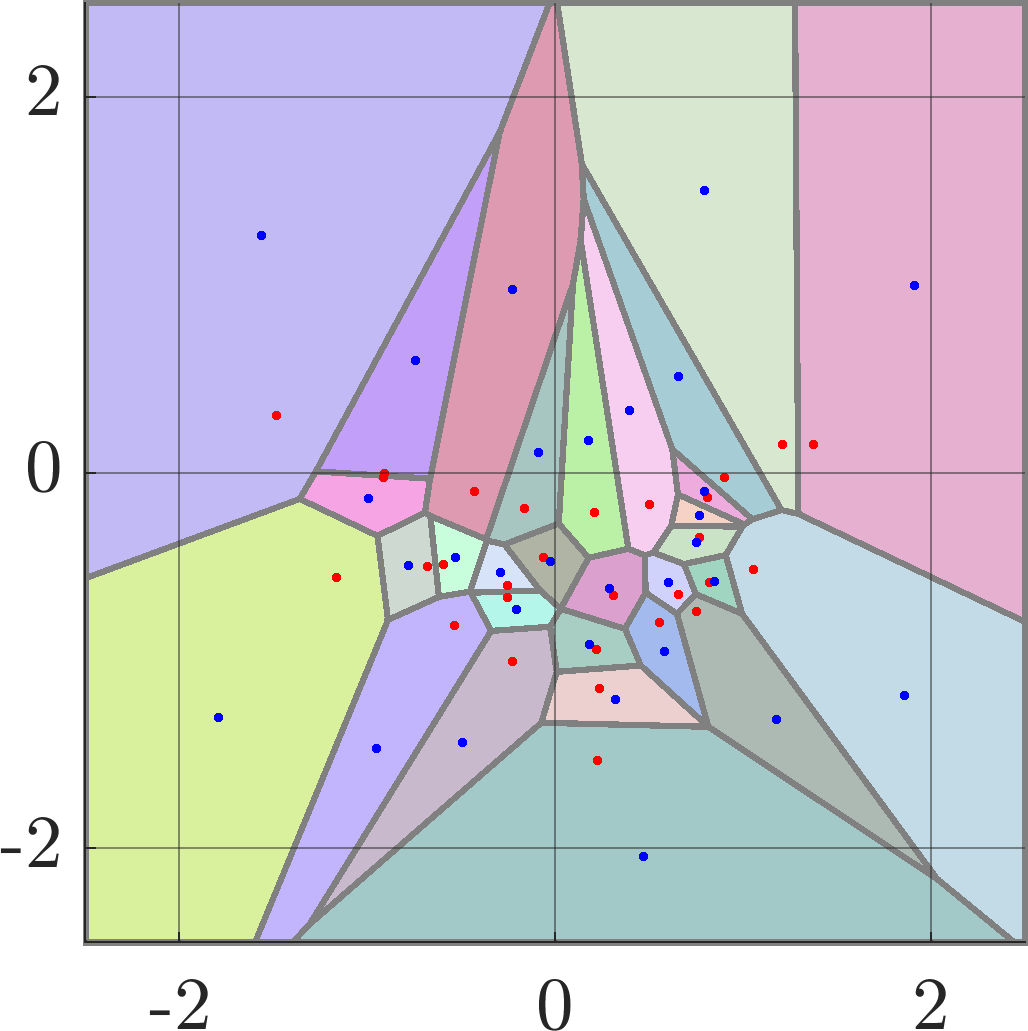}\quad
	\includegraphics[width=0.3\textwidth]{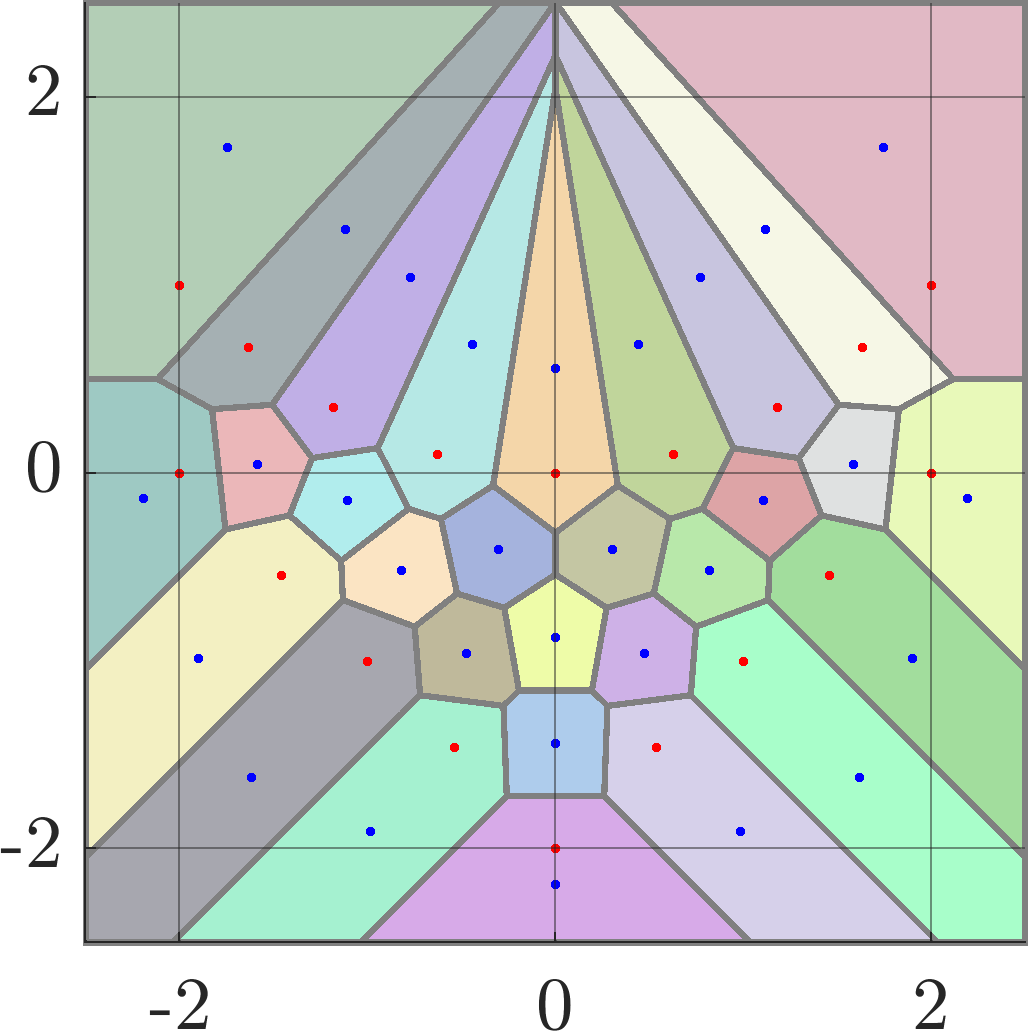}\quad
	\includegraphics[width=0.3\textwidth]{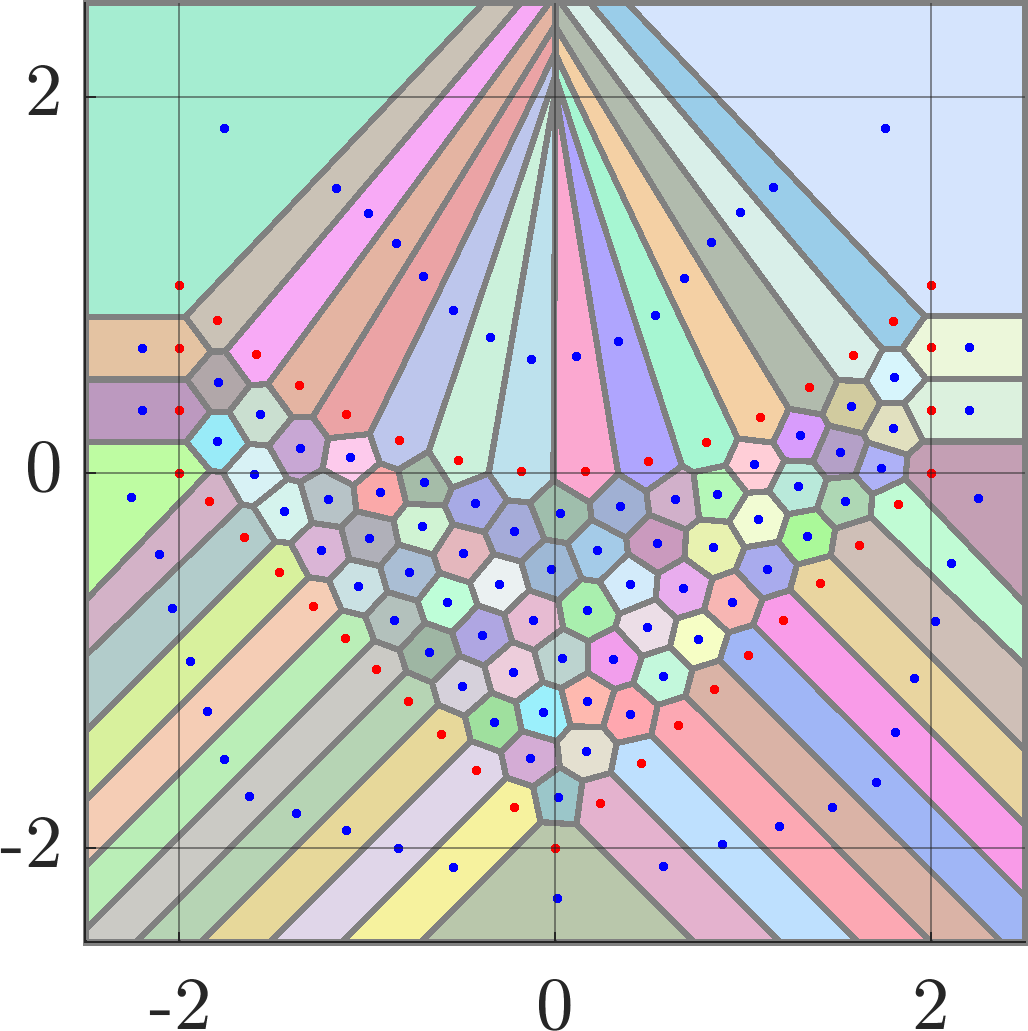}
	
	\vspace{3pt}
	
	\includegraphics[width=0.48\textwidth]{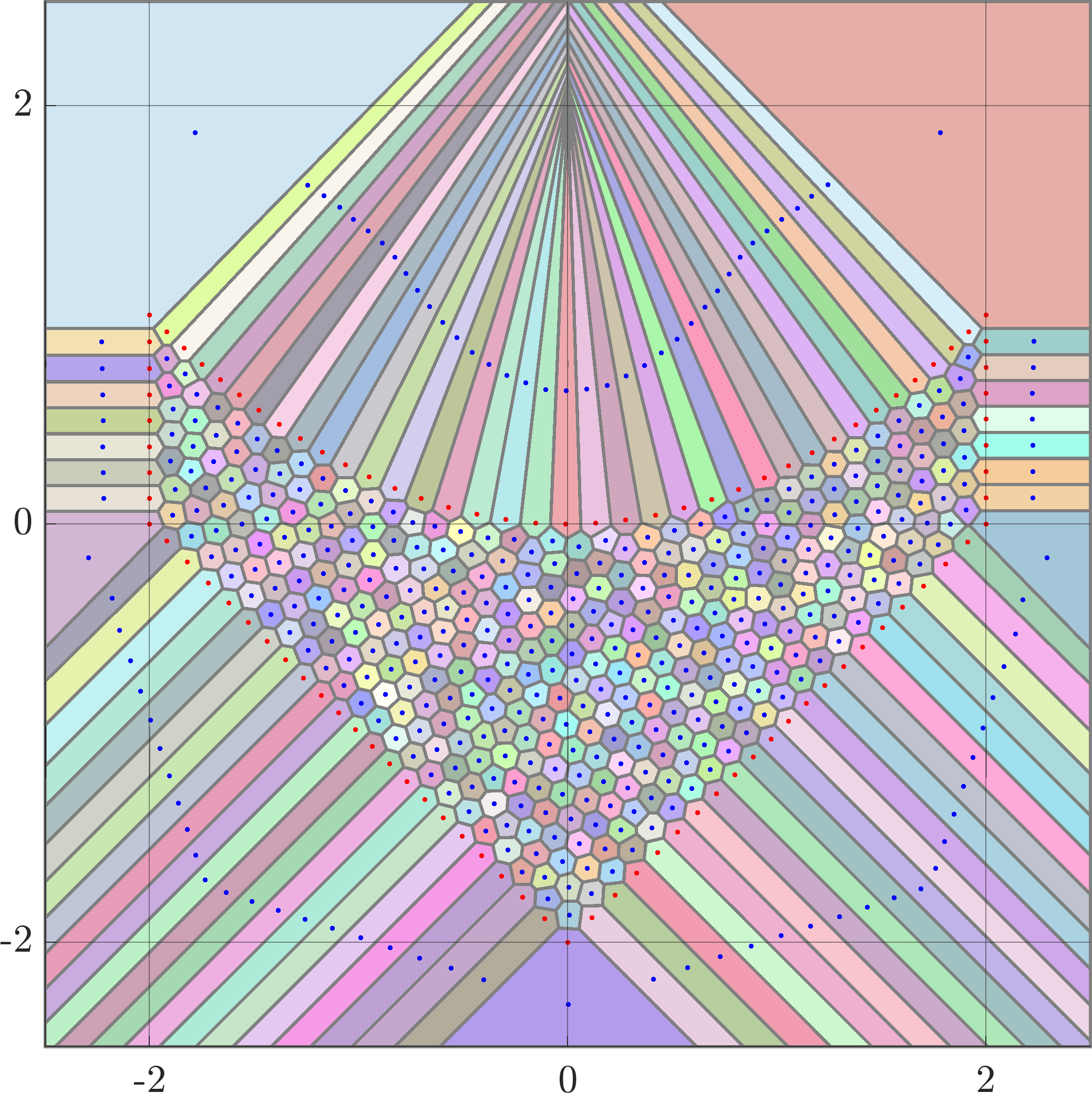}\quad
	\includegraphics[width=0.48\textwidth]{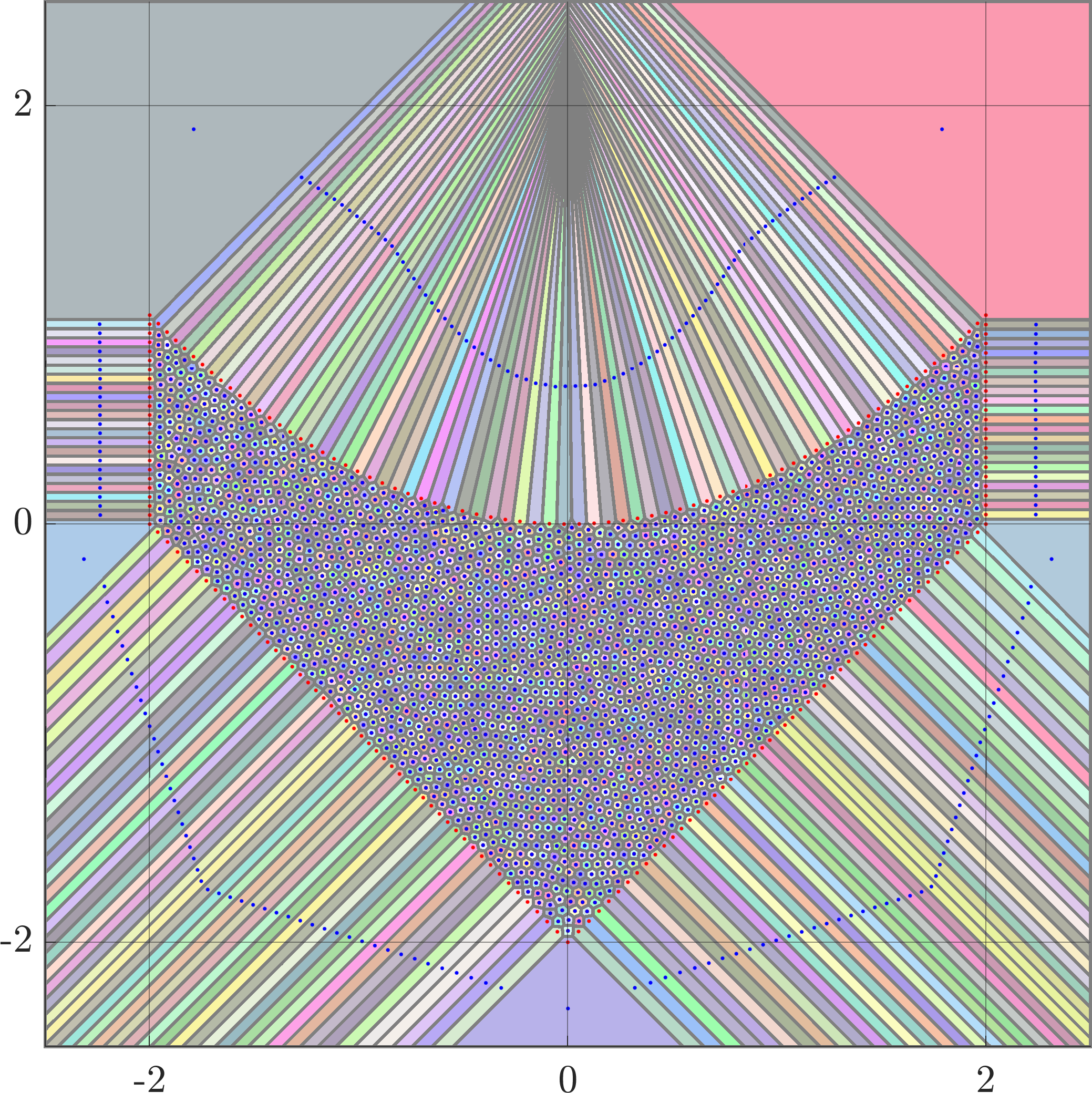}
	\caption{The case $d=2$ initialization and optimized configurations after each refinement. The simulations have $30$, $104$, $464$ and $2423$ samples, respectively.}
	\label{fig:global-ref-mat-2}
\end{figure}

Similar simulations are made for $d\in \{3,4\}$. The resulting final optimized configurations having $2684$ and $2043$ cells, respectively, are shown in Figure \ref{fig:results-3-4}. 

\begin{figure}
\includegraphics[height=0.7\textwidth]{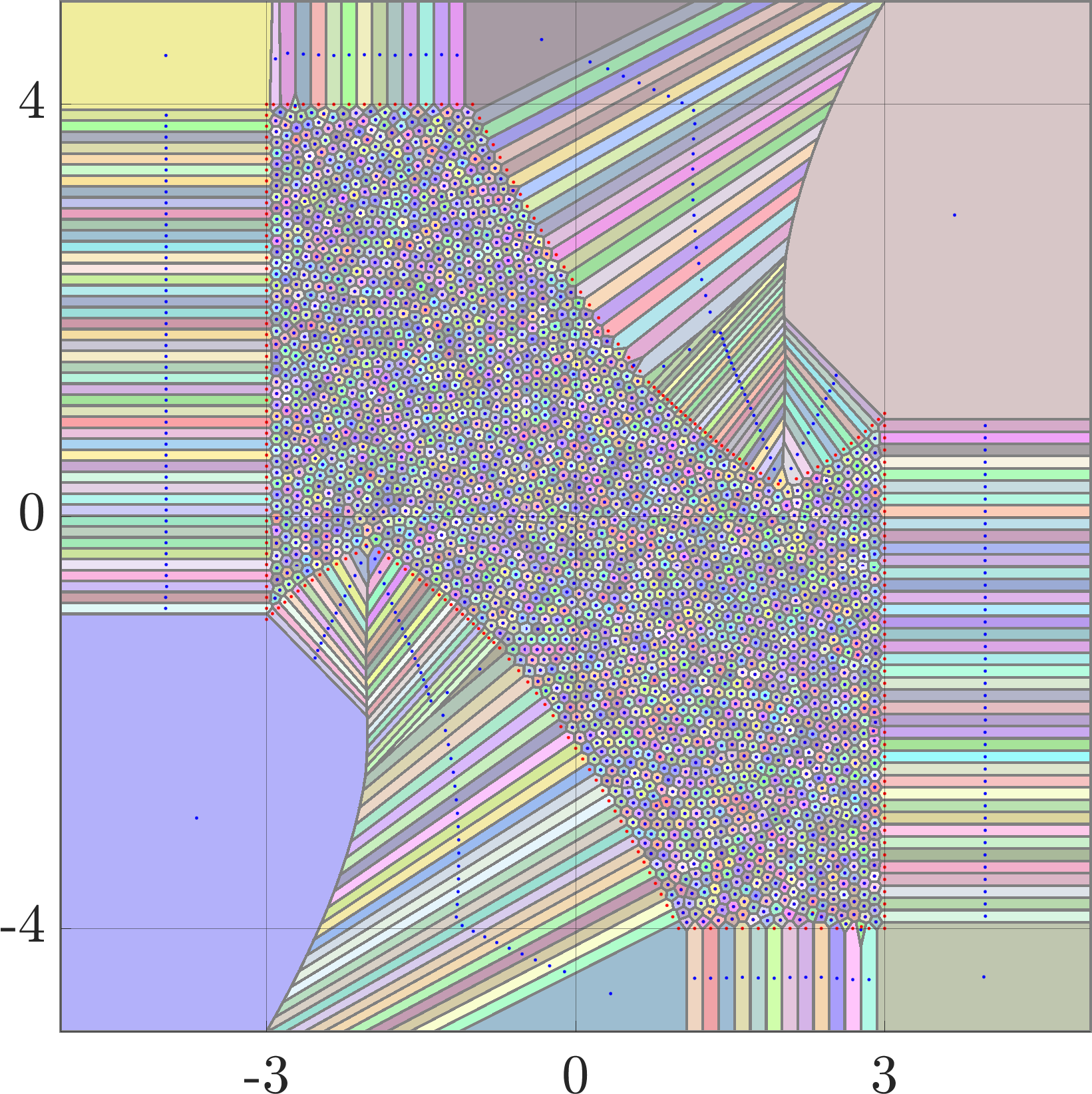}\quad
\includegraphics[height=0.7\textwidth]{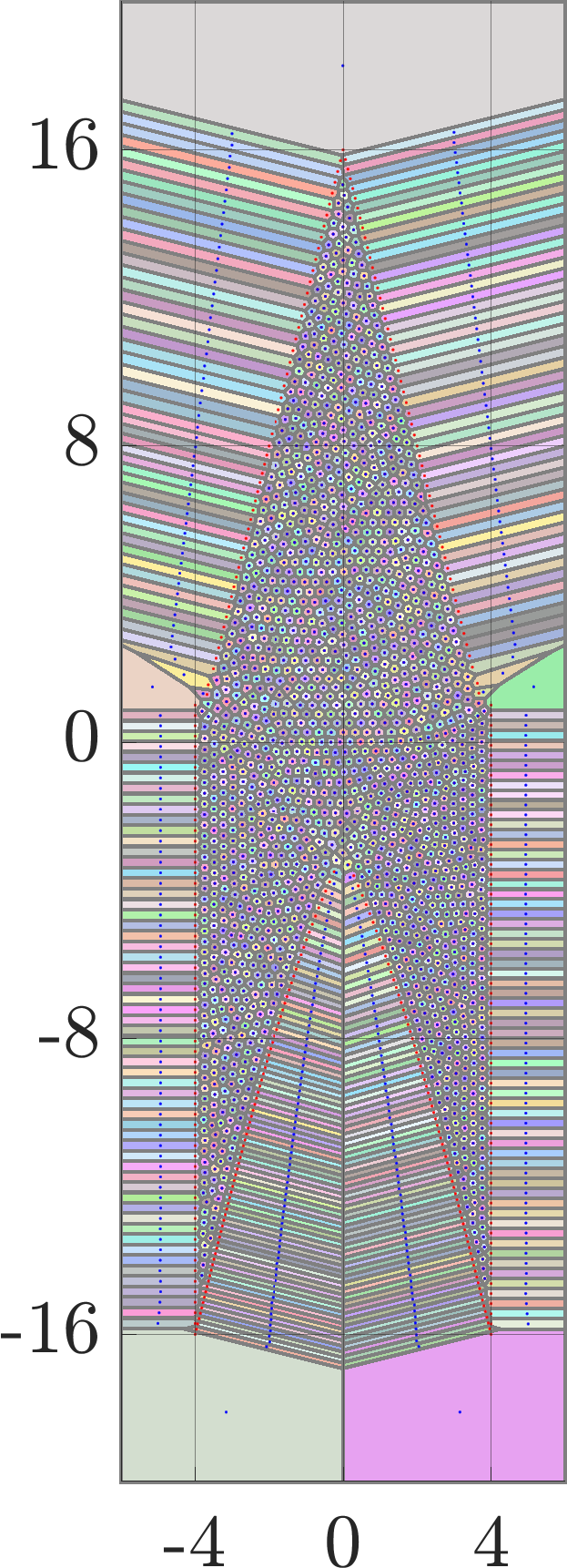}
\caption{Results given by Algorithm \ref{algo:global-ref} for cases $d=3$ ($2684$ cells) and $d=4$ ($2043$ cells).}
\label{fig:results-3-4}
\end{figure}

In all results presented up to this point, the centroids of Voronoi cells are shown in blue and the images of the samples are shown in red. In some situations, for $d \in \{3,4\}$ we may observe inner Voronoi points which do not coincide with the Voronoi cell's centroid. This happens especially close to the curves parametrized by $[-1,1] \ni t \mapsto (dt,t^d)$ corresponding to diagonal matrices, for which the Jacobian of $(\tr,\det)$ is singular. Such points generate boundary behavior in the interior of the diagram. 

\subsection{\bf Extracting the Blaschke-Santal\'o diagram from the Voronoi diagram.} The boundary points of the Blaschke-Santal\'o diagram can be recovered selecting only samples for which the image is far from the centroid of the associated Voronoi cell. However, extracting a polygon from these points is not straightforward. We use the following ideas to plot the Blaschke-Santal\'o diagram starting from the numerical results:
\begin{itemize}
\item If the resulting diagram is convex, taking the convex hull of the images of the samples suffices. 
\item Since the optimized samples form a Centroidal Voronoi tessellation, except the boundary points, we exploit the associated Delaunay triangulation which covers the entire convex hull of the diagram. However, triangles which are outside our diagram are nearly flat (having a small or large angle). We eliminate from the Delaunay triangulation such triangles (with thresholds that are set case by case).
\end{itemize}
The resulting diagrams are shown in Figure \ref{fig:Mat-extracted}.

\begin{figure}
\centering
\includegraphics[height=0.31\textheight]{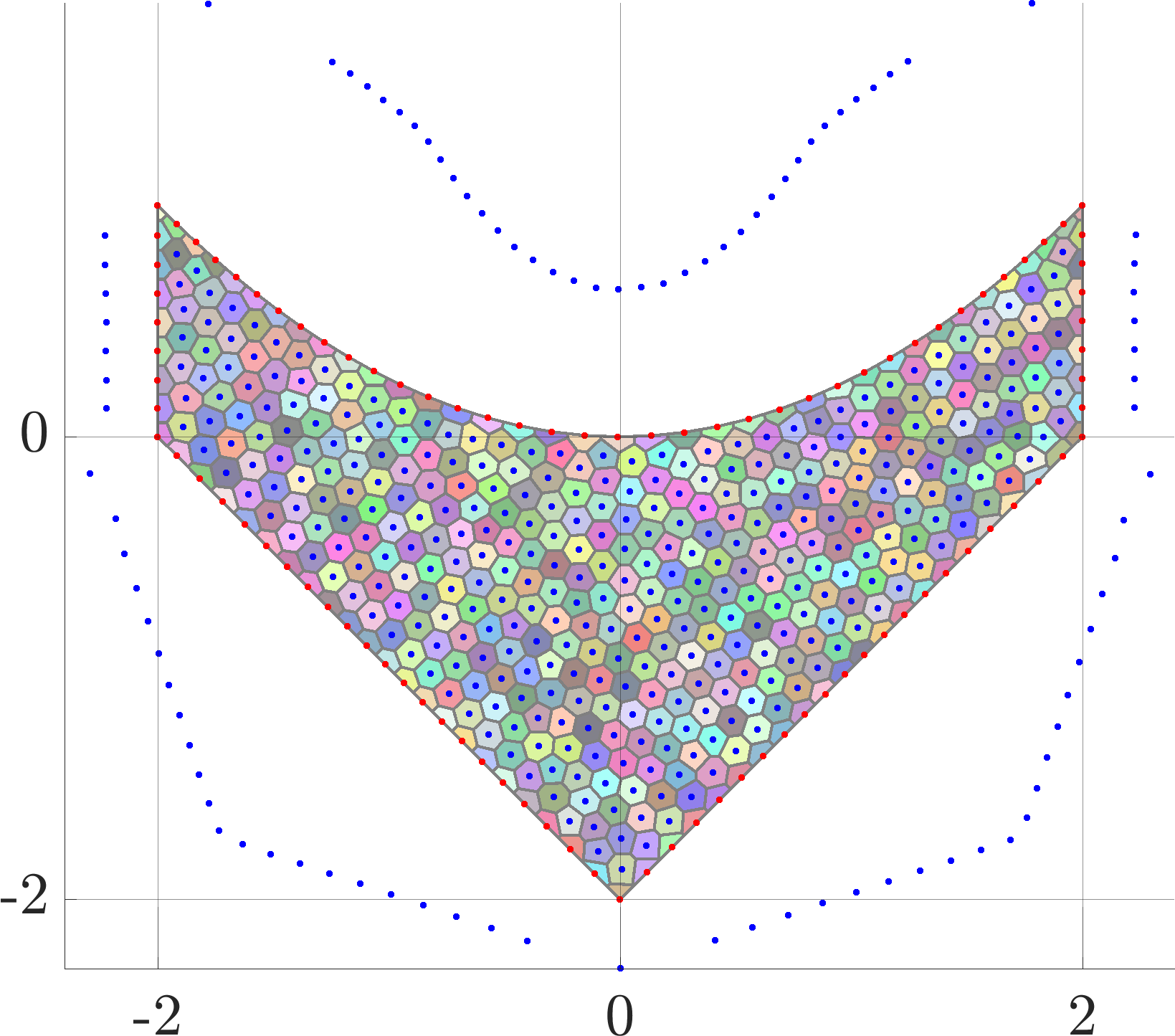}\quad
\includegraphics[height=0.31\textheight]{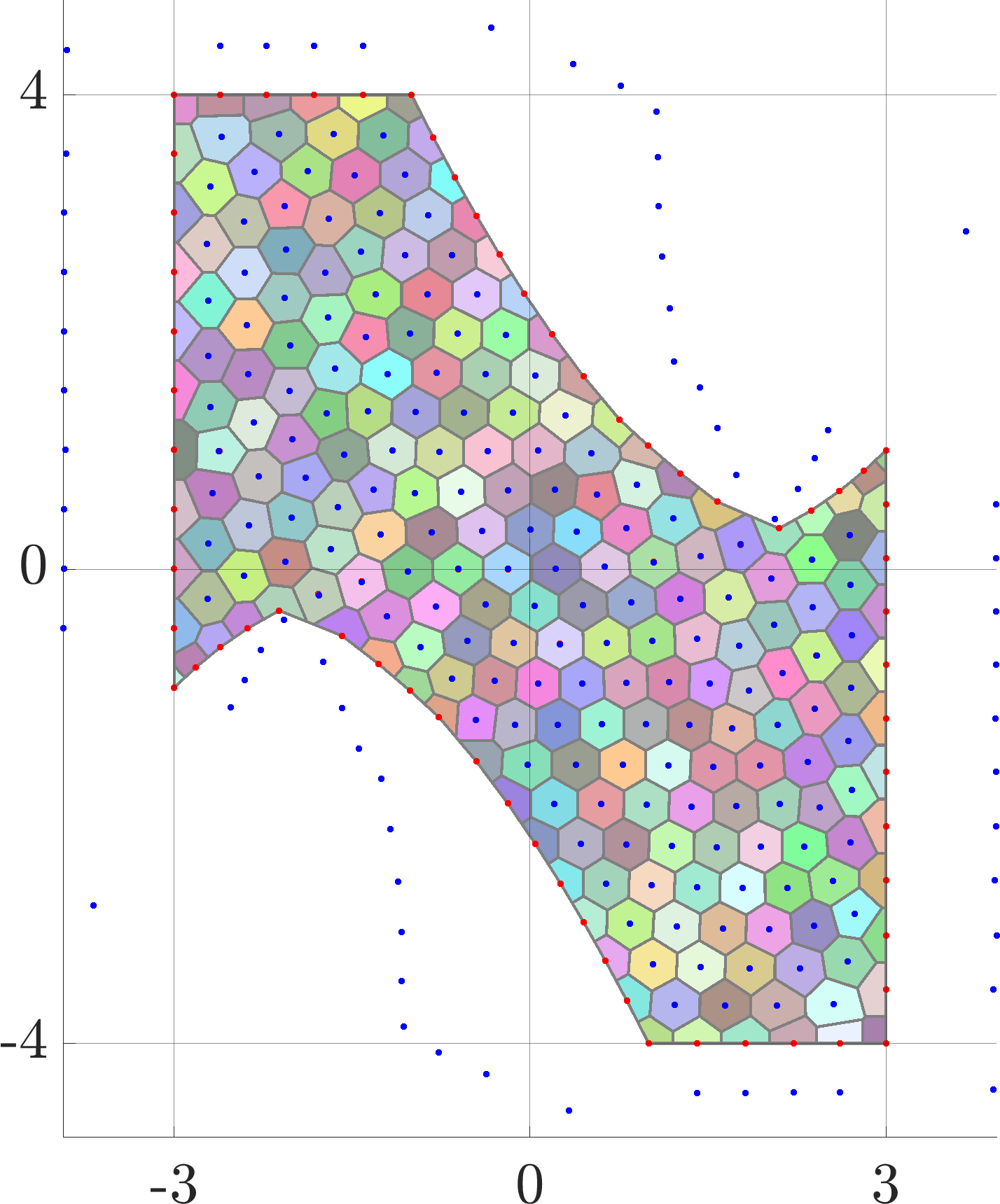}
\caption{Extracting the Blaschke-Santal\'o diagram from the optimal Voronoi tesellation by eliminating "flat" triangles from the associated Delaunay triangulation.}
\label{fig:Mat-extracted}
\end{figure}

\subsection{\bf Subset of a diagram.} In some cases, more detail is needed regarding certain parts of a Blaschke-Santal\'o diagram. Rather than increasing the point density everywhere in the diagram, it is possible to focus only on the region of interest. Suppose we are interested in the region $F(X) \cap B(y,r)$. Then we can implement all algorithms presented previously adding the additional constraint 
\begin{equation} |F(x_i)-y|\leq r.
\label{eq:restriction}
\end{equation}
The practical difficulty is that constraints \eqref{eq:restriction} are nonlinear. General software like \texttt{fmincon} and Knitro allow the use of non-linear constraints. The behavior of the optimization algorithm is improved if the gradient of the non-linear constraints is computed explicitly, which is possible in our case. Nevertheless, adding the non-linear constraints \eqref{eq:restriction} slows down the proposed algorithms. The speed loss is compensated by a lower number of samples, since we only focus on a subset of the desired diagram.

As an example, we show the of the $(\tr,\det)$ diagram for $d=4$ contained in the disk $B((3.9,1),0.2)$. We were interested in exploring extremal matrices in this region of the diagram, since the boundary parametrization seemed to change here. 

\begin{figure}
\centering
\fbox{\includegraphics[height=0.55\textwidth]{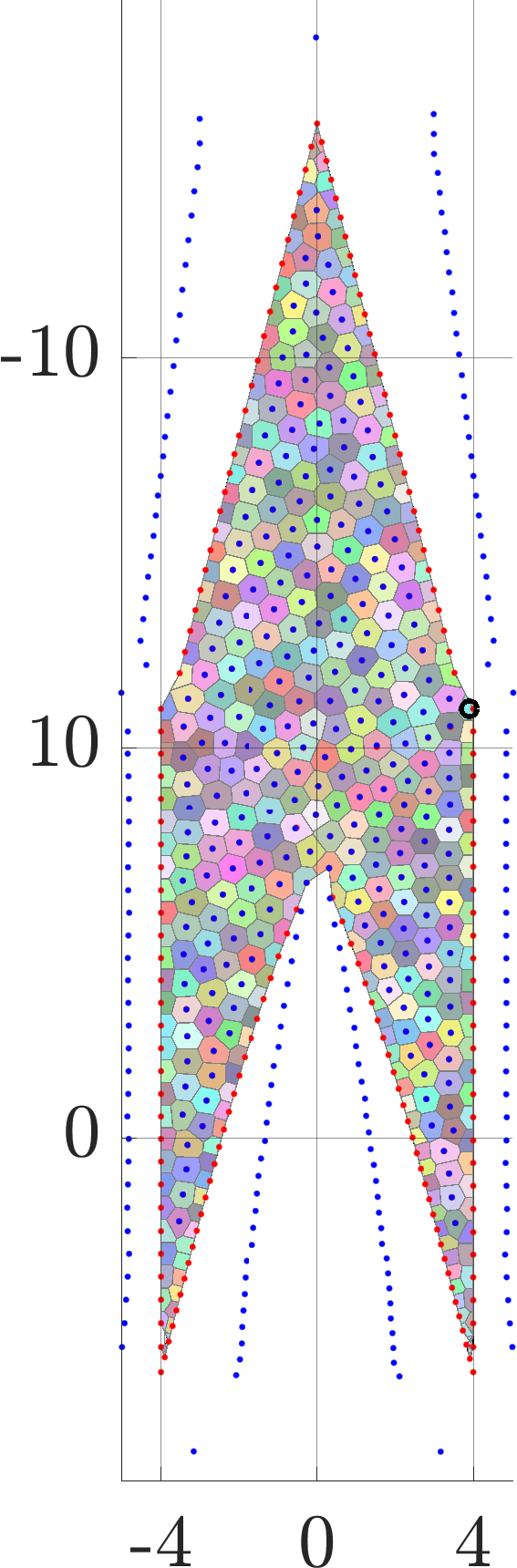}}\quad 
\fbox{\includegraphics[height=0.55\textwidth]{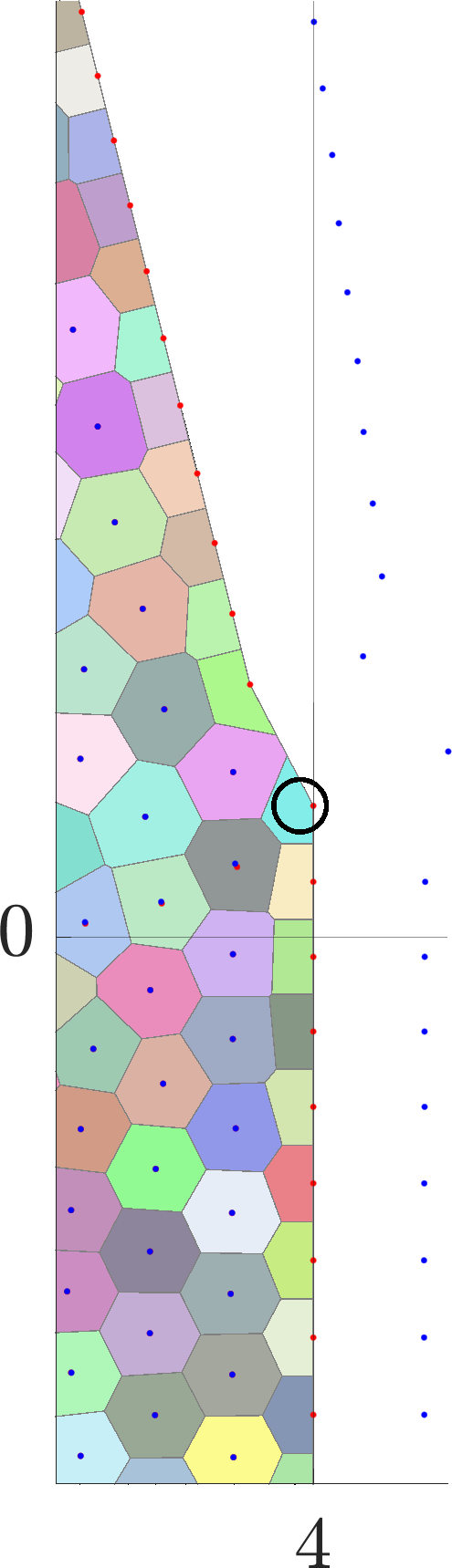}}\quad 
\fbox{\includegraphics[height=0.55\textwidth]{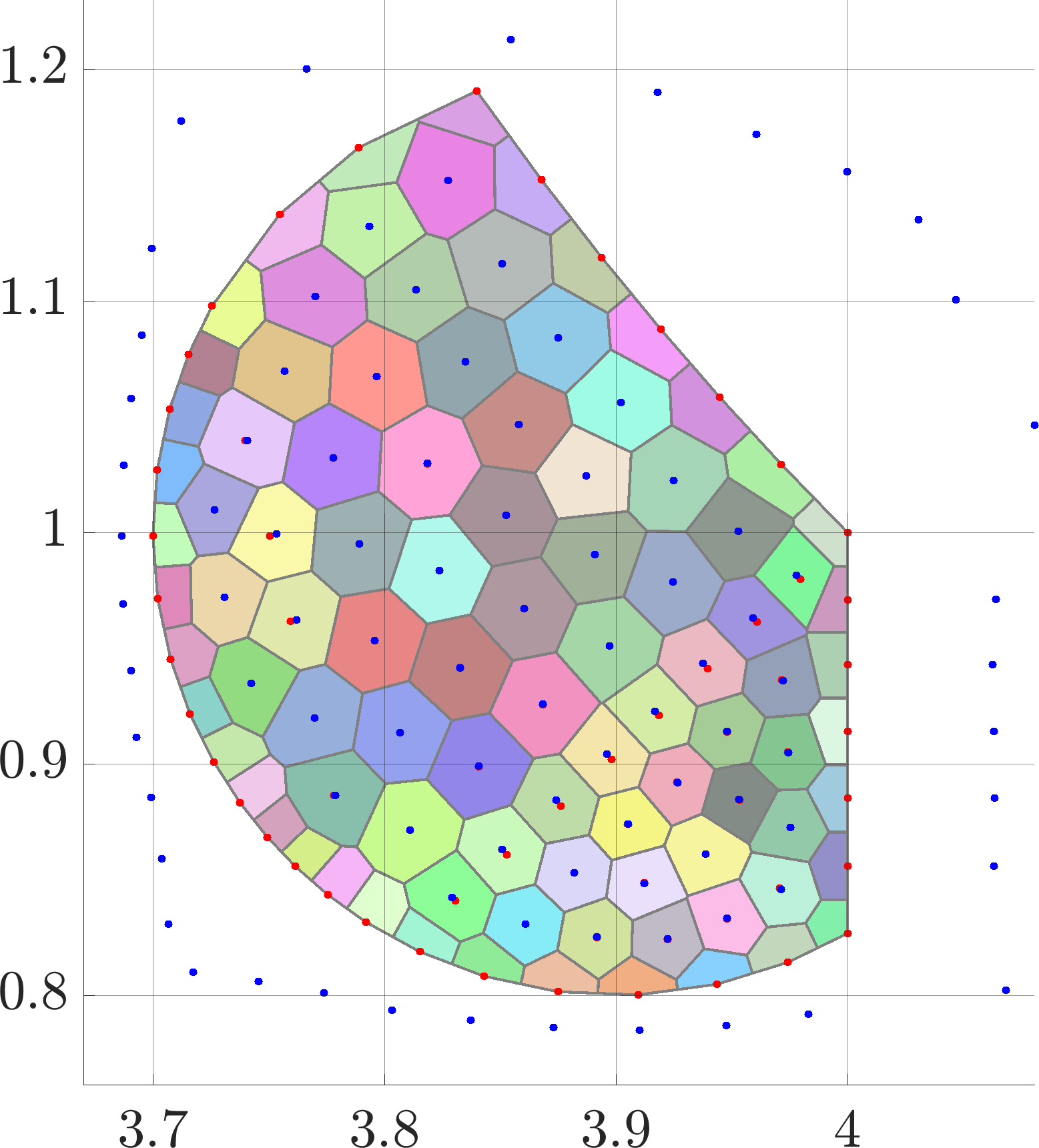}}
\caption{Blaschke-Santal\'o diagram for $(\tr,\det)$, $d=4$ (left), zoom around the point $(4,1)$ (center). Diagram restrained to the disk $B((3.9,1),0.2)$ (corresponding to the small circle in the center image) obtained using constraint \eqref{eq:restriction} in the algorithms (right).}
\label{fig:BS4_zoom}
\end{figure}

\subsection{\bf Details concerning $d \in \{3,4\}$.} We already saw that the case $d=2$ admits an explicit characterization of the boundary of the $(\tr,\det)$ diagram. For higher dimensions such a description is difficult to obtain. Nevertheless, identifying the extremal matrices for diagrams given in Figure \ref{fig:results-3-4} we are able to conjecture a precise parametrization of the corresponding boundaries. 

We give a brief description of the extremal matrices and corresponding parametrizations of the boundary for $d=3$:
\begin{itemize}
	\item Left boundary: $A = \begin{pmatrix}
		-1 & a & a \\
		a & -1 & -a \\
		a & -a & -1
	\end{pmatrix}$:   $[-1,1] \ni x\mapsto (-3,- 2x^3 + 3x^2 - 1)$
	\item Right boundary: $A=\begin{pmatrix}
	1 & a & a \\
	a & 1 & a \\
	a & a & 1
	\end{pmatrix}$: $[-1,1] \ni x \mapsto (3,2x^3 - 3x^2 + 1)$
	\item Top boundary part 1: $A=\begin{pmatrix}
	-1 & 1 & 1 \\
	1 & -1 & 1 \\
	1 & 1 & a
	\end{pmatrix}$:  $[-1,1]\ni x\mapsto (-2+x,4)$
	\item Top boundary part 2:  $A=\begin{pmatrix}
	1 & 1 & -1 \\
	1 & a & -1 \\
	-1 & -1 & a
	\end{pmatrix}$: $[-1,-0.125] \ni x\mapsto (2x+1,x^2 - 2x + 1)$
	
	\item Top boundary part 3: $\begin{pmatrix}
	a & -1 & -1 \\
	-1 & a & 1 \\
	-1 & 1 & a
	\end{pmatrix}$: $[0.25,2/3]\ni x\mapsto (3x,x^3 - 3*x + 2)$
	
	\item Top boundary part 4: $[2/3,1]\ni x \mapsto (3x,x^3)$
\end{itemize}
Similar observations can be made for $d=4$. Due to symmetry reasons, we only detail the right-half of the boundary:

\begin{itemize}
	\item Right boundary: $\begin{pmatrix}
	1 & -a & -a & a \\
	-a & 1 & -a & a \\
	-a & -a & 1 & a \\
	a & a & a & 1
	\end{pmatrix}$:   $[-1,1] \ni x\mapsto (4,- 3x^4 - 8x^3 - 6x^2 + 1)$
	\item Bottom right: 
	$\begin{pmatrix}
	a & -1 & 1 & 1 \\
	-1 & a & 1 & 1 \\
	1 & 1 & a & -1 \\
	1 & 1 & -1 & a
	\end{pmatrix}$: $[-1,1] \ni x \mapsto (4x,x^4 - 6x^2 - 8x - 3)$
	\item Top right-part 1: $\begin{pmatrix}
	1 & -1 & 1 & -1 \\
	-1 & a & -1 & -1 \\
	1 & -1 & a & 1 \\
	-1 & -1 & 1 & 1
	\end{pmatrix}$: $[-1,\frac{3-\sqrt{2}}{2}] \ni x \mapsto (2x+2,8-8x)$
	\item Top right-part 2: $\begin{pmatrix}
	1 & 1 & -1 & 1-a \\
	1 & a & 0 & 1 \\
	-1 & 0 & 1 & 1 \\
	1-a & 1 & 1 & 1
	\end{pmatrix}$: $[2-\sqrt{2},1]\ni x\mapsto (x+3,-(x - 2)(x^2 - 2x + 2))$
\end{itemize}
The resulting parametrized boundaries are shown in Figure \ref{fig:d=3,4-param}.
\begin{figure}
	\centering
	\includegraphics[height=0.5\textwidth]{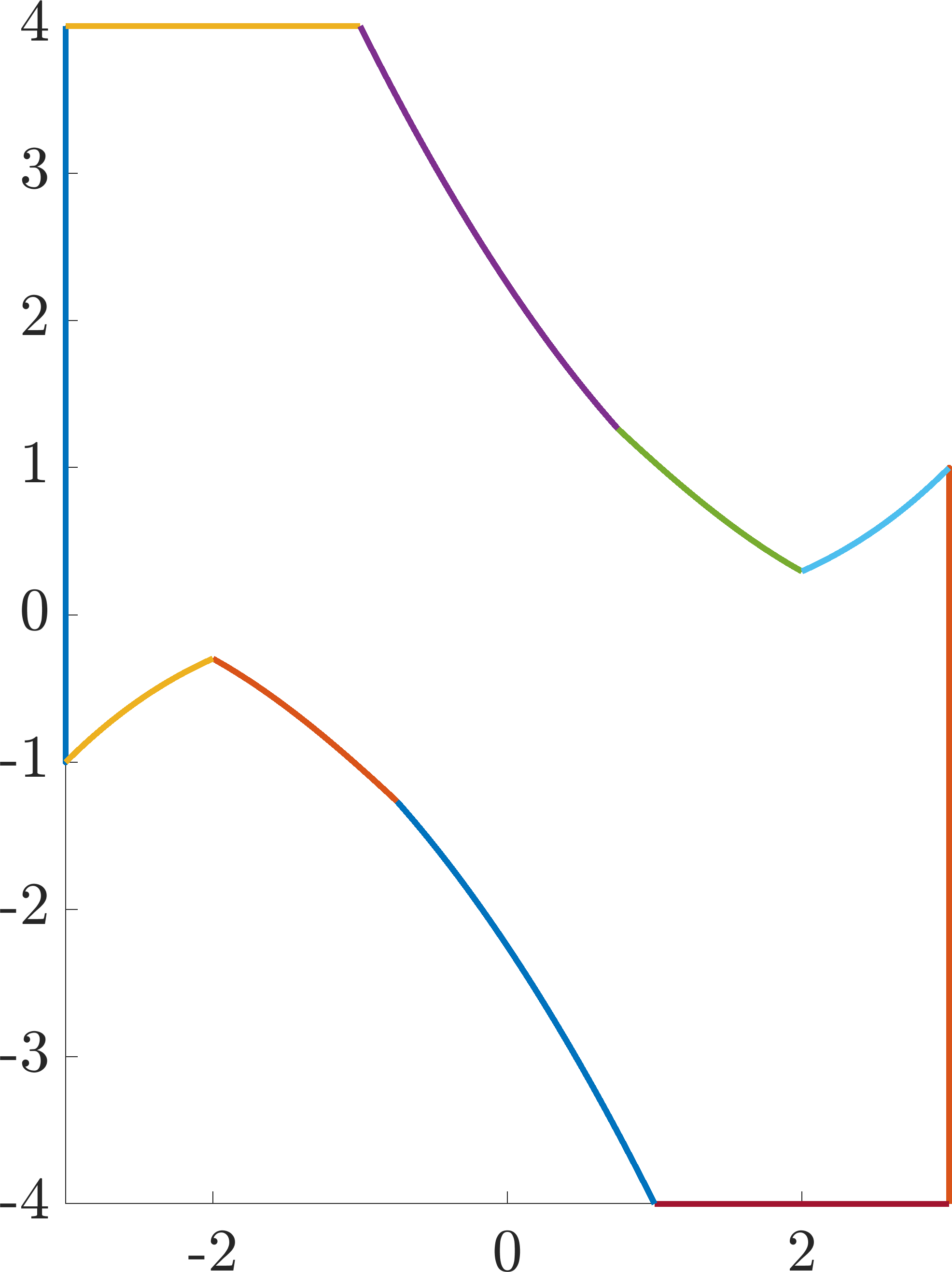}\quad
	\includegraphics[height=0.5\textwidth]{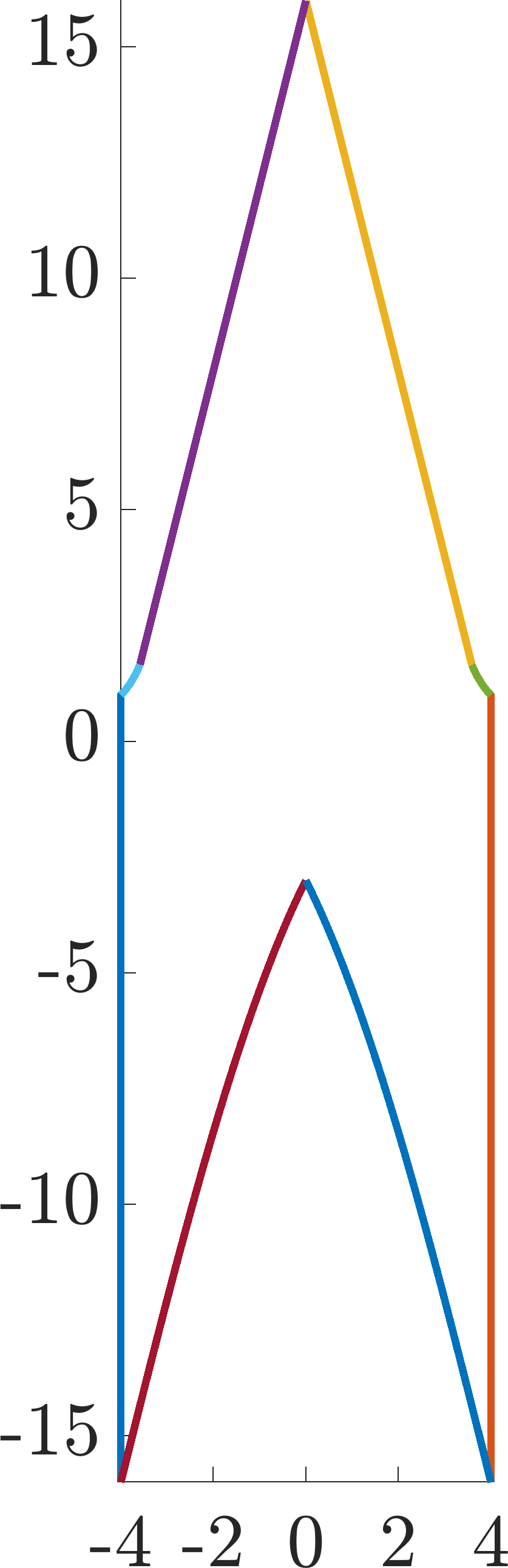}
	\caption{Direct parametrizations of the boundaries of the $(\tr,\det)(\Sym_d([-1,1])$ diagrams for $d=3,4$.}
	\label{fig:d=3,4-param}
\end{figure}

\subsection{Higher dimensions.} The algorithm proposed generalizes in a straightforward way to three dimensional diagrams. All algorithmic aspects remain the same. Three dimensional Restricted Voronoi diagrams are computed again using the library Geogram \cite{cvt-levy}. The example for the diagram $(\tr, \lambda_1\lambda_2+\lambda_2\lambda_3+\lambda_3\lambda_1,\det)$ for matrices in $\Sym_3([-1,1])$ is shown in Figure \ref{fig:3D-computations}. As usual, $\lambda_1,\lambda_2,\lambda_3$ denote the eigenvalues of the $3\times 3$ matrix. 
\begin{figure}
\centering
\includegraphics[width=0.45\textwidth]{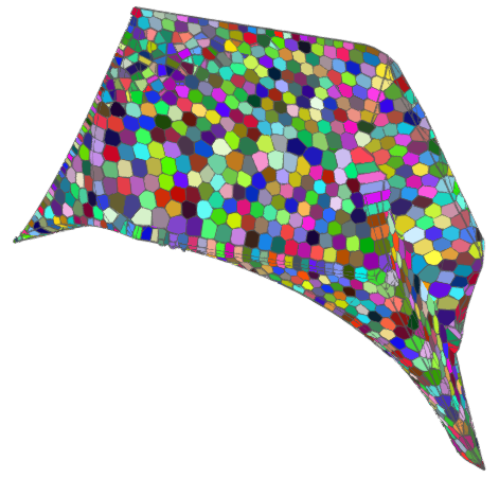}\quad
\includegraphics[width=0.45\textwidth]{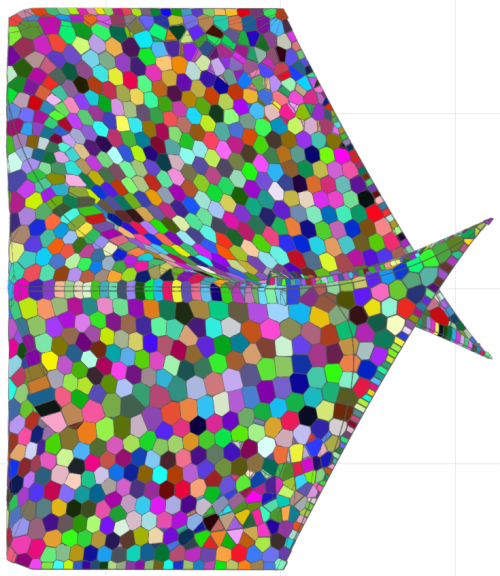}
\includegraphics[width=0.45\textwidth]{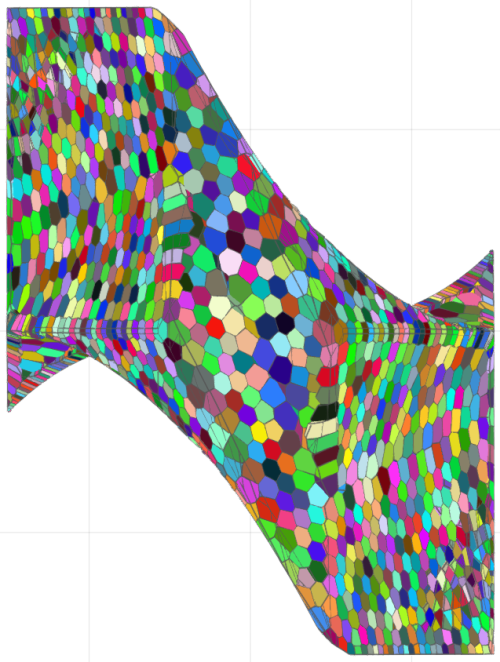}\quad
\includegraphics[width=0.45\textwidth]{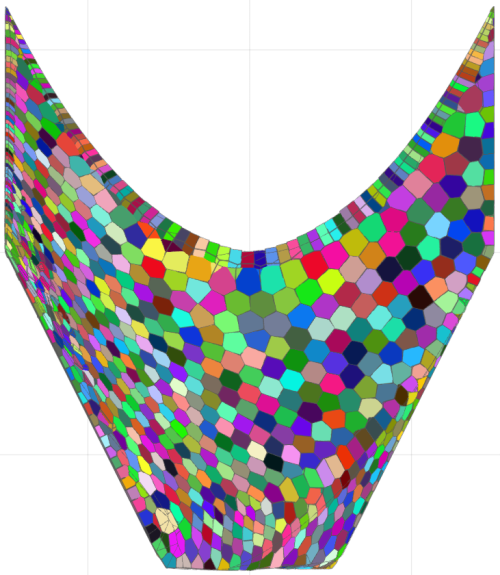}
\caption{Different views of the three dimensional Blaschke-Santal\'o diagram for $(\tr, \lambda_1\lambda_2+\lambda_2\lambda_3+\lambda_3\lambda_1,\det)$ in $\Sym_3$. 
General 3D view (top left), $Oyz$ view (top-right), $Oxz$ view (bottom-left), $Oxy$ view (bottom-right).}
\label{fig:3D-computations}.
\end{figure}

\subsection{Comparison with Monte Carlo method} The simplest approach to investigate the Blaschke-Santal\'o diagrams is generating random samples and computing the corresponding images. As underlined in Section \ref{sec:optimal-transport}, this choice does not necessarily produce images uniformly distributed in the desired diagram. In the following, we generate progressively, a fixed number of sample points and the corresponding images. We compare the quality of the result and the computational cost with the algorithms proposed in the previous sections.

We consider $10^4$ and, respectively $10^6$ random samples in $\Sym_d([-1,1])$, evaluate \eqref{eq:trace-det} and plot the corresponding points in $\Bbb{R}^2$. The corresponding results for $d \in \{2,3,4\}$ are shown in Figure \ref{fig:random}. The Blaschke-Santal\'o diagrams computed with the algorithms proposed in previous Sections are represented as polygons while random samples are represented by points. The diagrams are rescaled to have the same width in the horizontal and vertical directions.

We notice that the Monte Carlo approach is inefficient, especially when the dimension increases. For $d \in \{3,4\}$ using one million random samples fails to give an accurate description of the diagram.

\begin{figure}
\centering
\includegraphics[height=0.3\textwidth]{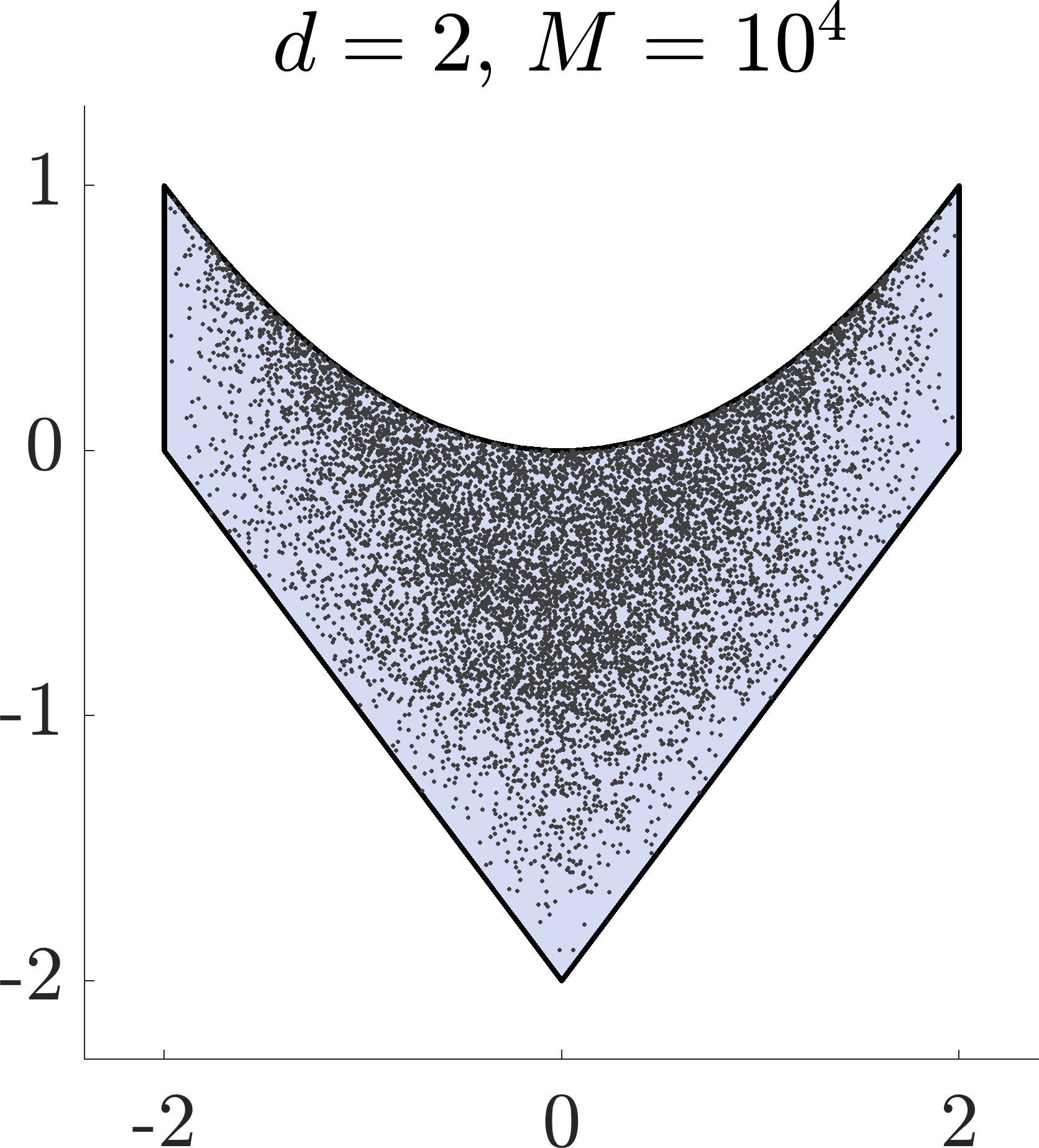}\quad
\includegraphics[height=0.3\textwidth]{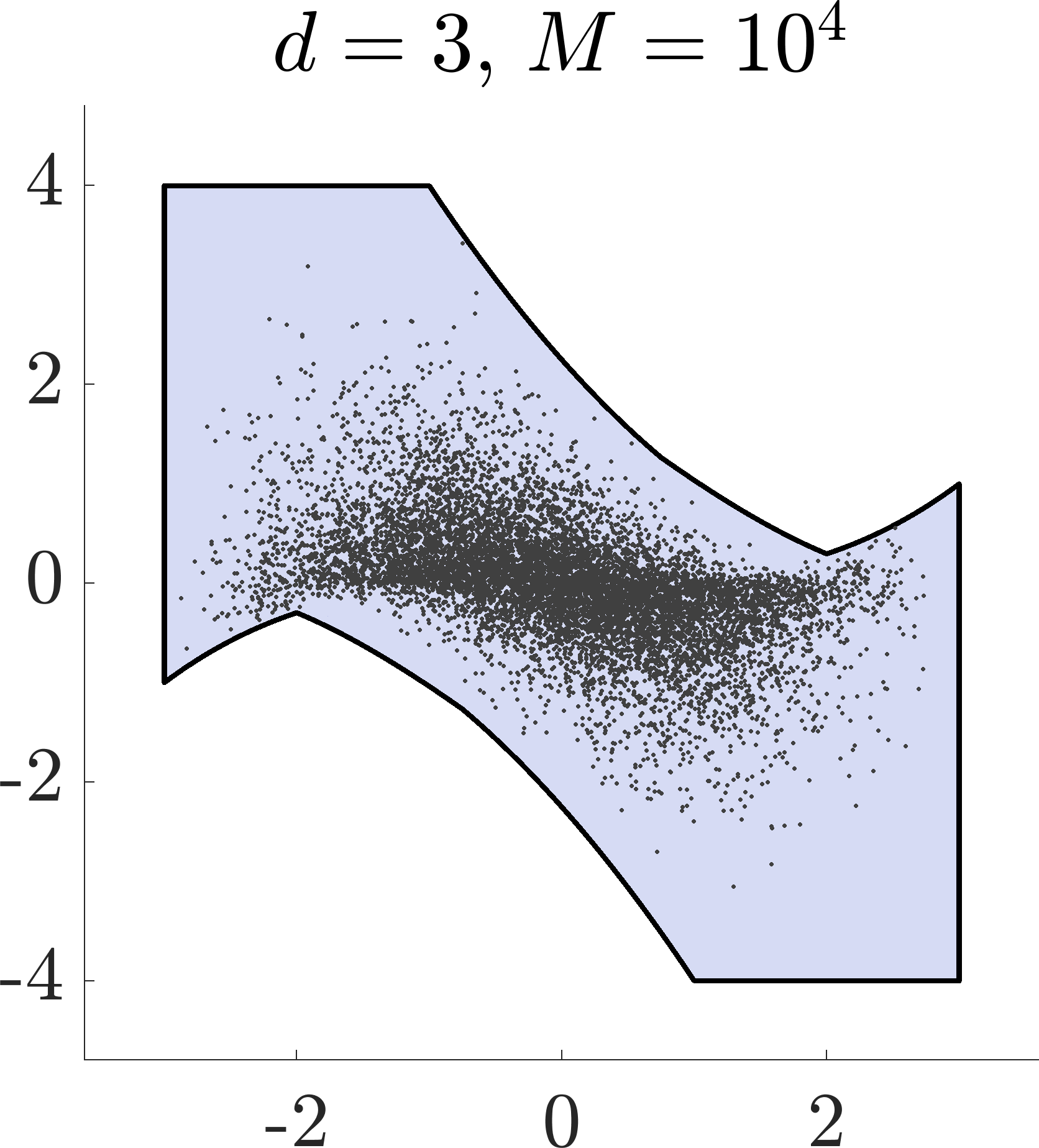}\quad
\includegraphics[height=0.3\textwidth]{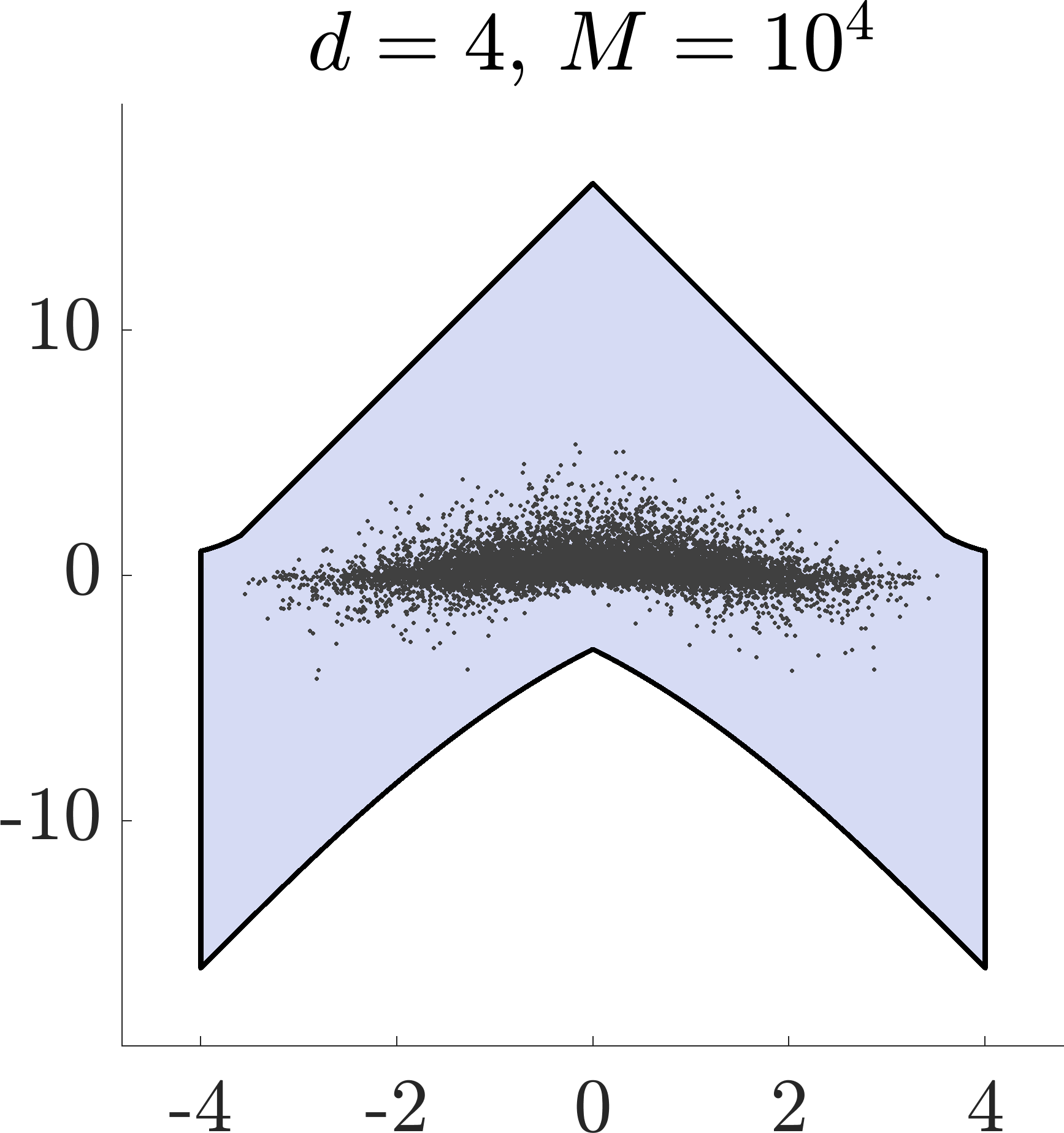} 

\vspace{0.3cm}
	
\includegraphics[height=0.3\textwidth]{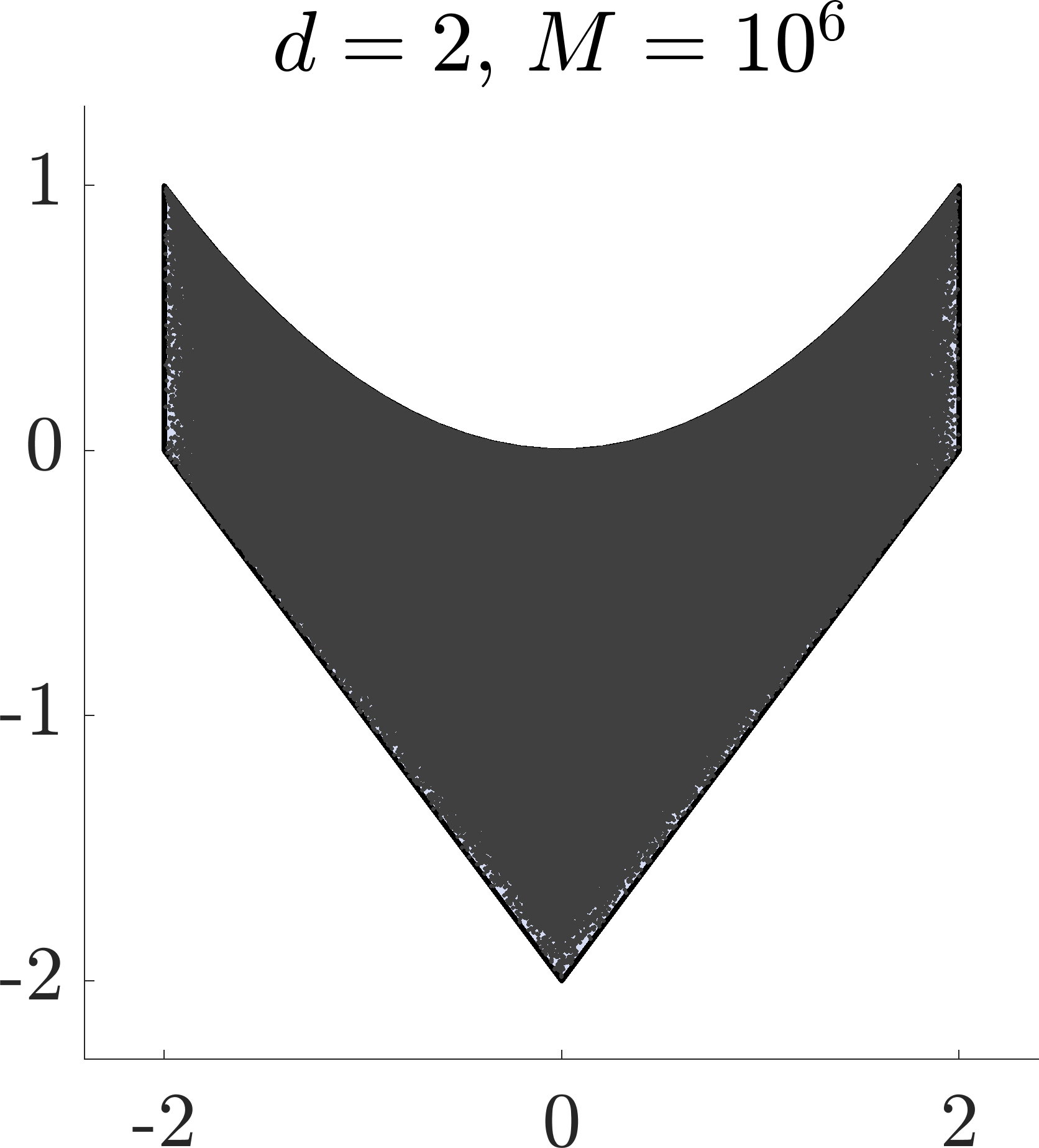}\quad
\includegraphics[height=0.3\textwidth]{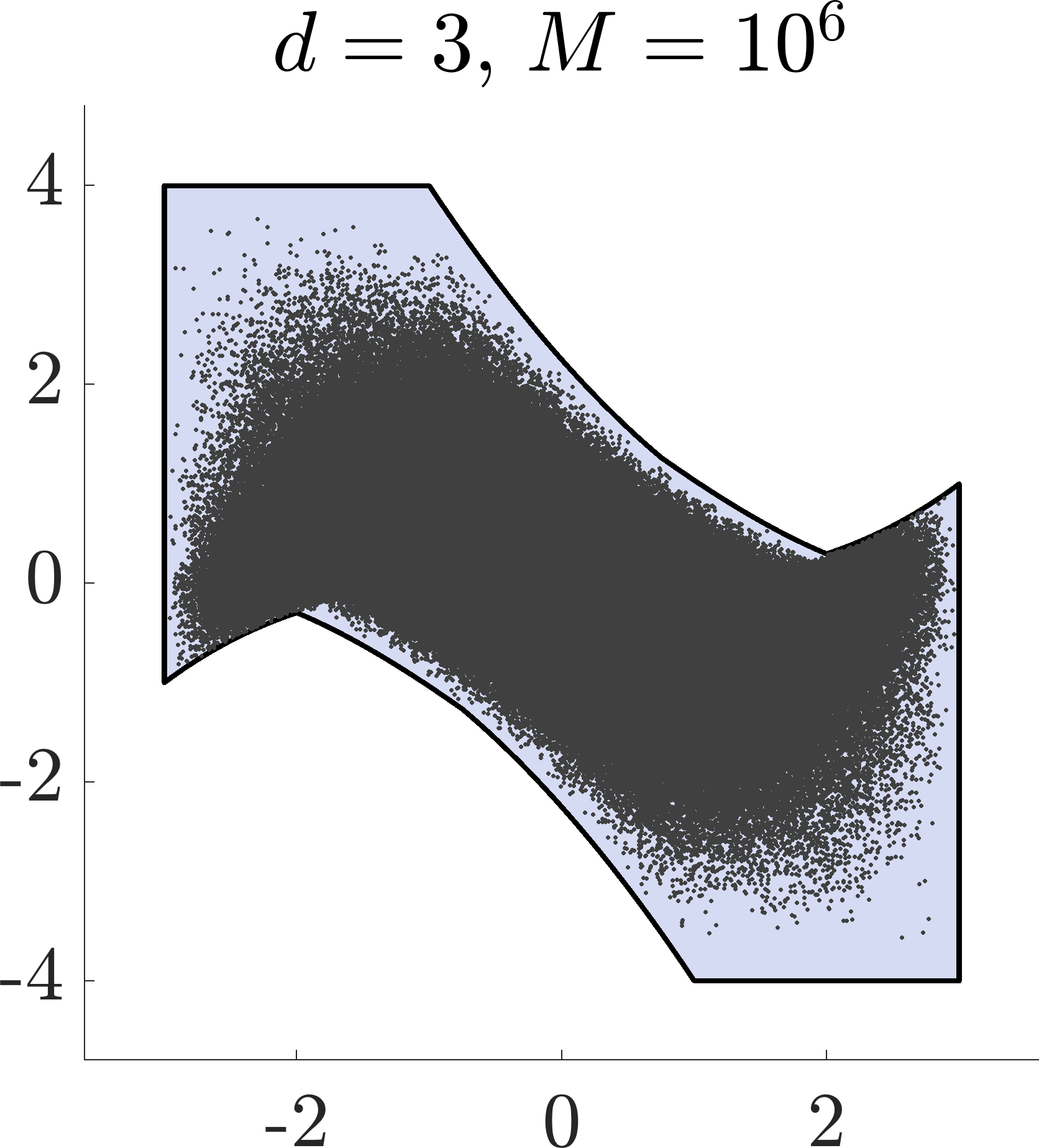}\quad
\includegraphics[height=0.3\textwidth]{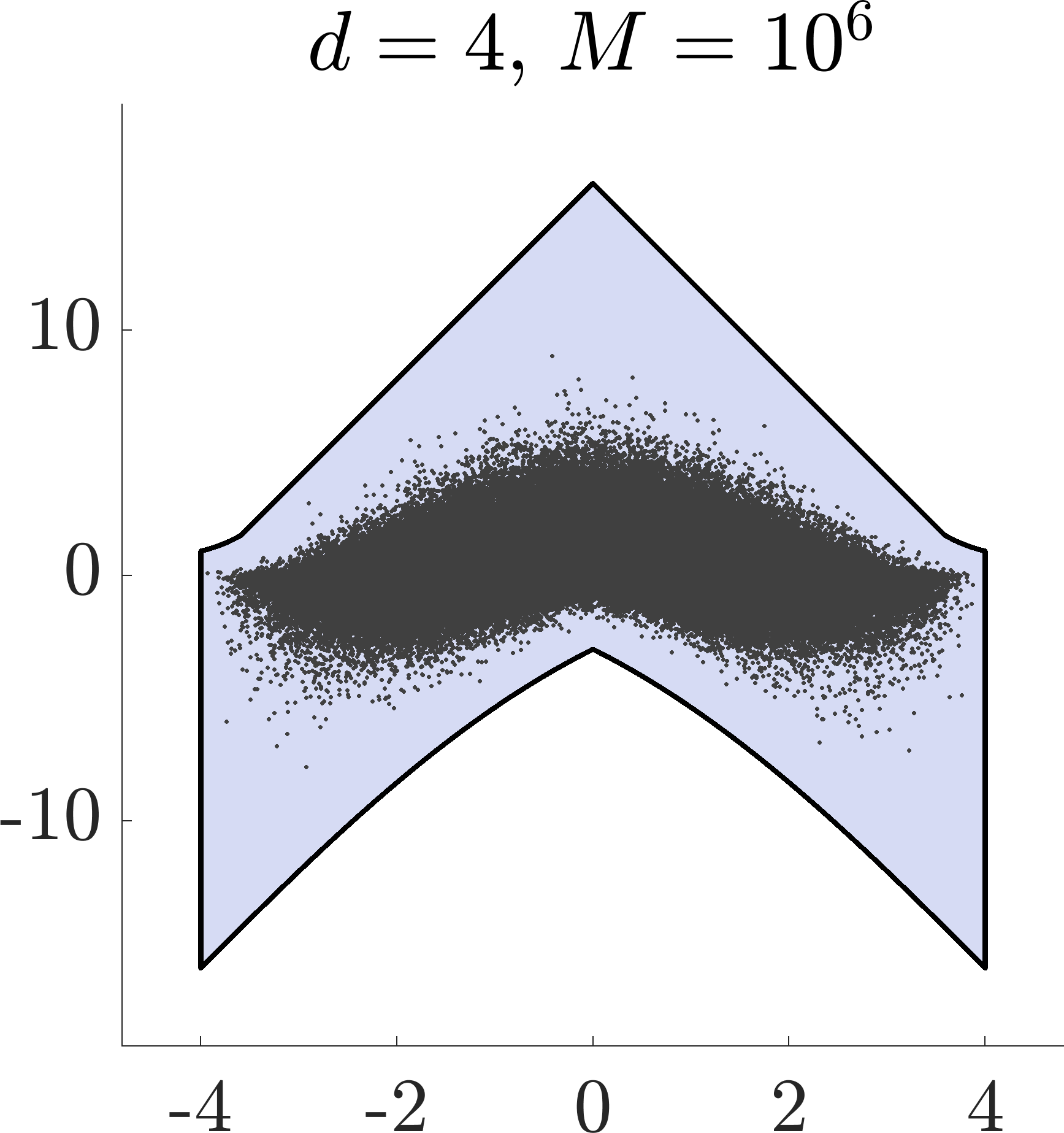}
\caption{Generating $10^4$ and, respectively, $10^6$ random matrices in $\Sym_d([-1,1])$ for cases $d\in\{2,3,4\}$ and plotting the corresponding images given by the function \eqref{eq:trace-det}.}
\label{fig:random}
\end{figure}

In comparison, we give an analysis of the computational cost for some of our simulations which give a high quality approximation of the Blaschke-Santal\'o diagrams. Simulations in Figures \ref{fig:single-lloyd} and \ref{fig:single-cvt} use $M=200$ samples with a limit of $1000$ iterations. For Algorithm \ref{algo:var-cvt} at most $200\times 1000$ function and gradient evaluations are preformed. The computation of the Voronoi diagrams using \texttt{Geogram} is very efficient. Algorithm \ref{algo:lloyd} performs additional function evaluations when projecting the centroids on the space of samples, but remains of the same order of magnitude: $O(M\times Q)$, where $Q$ is the number of iterations. 

\section{Application II: example from convex geometry}

We focus now on an application from convex geometry. Various other works investigate inequalities between geometric quantities using Blaschke-Santal\'o diagrams. Among these we mention \cite{missingADR}, \cite{ftouhi-lamboley}, \cite{ftouhi-Cheeger}, \cite{ftouhi-numerics}, \cite{ftouhi-polya}. In order to apply directly our computational framework we consider a particular case where functionals involved are smooth and the corresponding diagram is bounded.

Consider the following three quantities: area $A(\Omega)$, perimeter $\Per(\Omega)$, momentum of inertia $W(\Omega)$ among two dimensional convex shapes $\Omega$ with two axes of symmetry. Since in this case the centroid is at the origin, the momentum of inertia is given by $W = \int_\Omega |x|^2 dx$.

As usual, when studying Blaschke-Santal\'o diagrams, we consider scale invariant quantities linking the three functionals. One can naturally consider:
\begin{itemize}
\item the isoperimetric ratio $A(\Omega)/\Per(\Omega)^2$, bounded above by $\frac{1}{4\pi}$. 
\item the ratio $A^2(\Omega)/W(\Omega)$, bounded above by $2\pi$.
\end{itemize}
One can notice that both scale invariant ratios considered above are maximized by the disk. Theoretical details regarding the corresponding Blaschke-Santal\'o diagram are studied in \cite{APW-preprint}.

We consider the mapping 
\begin{equation}
\F:\Omega\mapsto (100 A(\Omega)/P^2(\Omega),A^2(\Omega)/W(\Omega)),
\label{eq:APW}
\end{equation} where the factor $100$ is added so that the two quantities are comparable. Our objective is to approximate the image of the mapping $\F$ defined above. 

Various methods were developed for parametrizing convex sets. We mention intersections of hyperplanes \cite{LROconvex}, the support function parametrized using truncated Fourier series in \cite{AntunesBogosel} or values on a discrete grid in \cite{DiscreteConvex}. Methods proposed previously are generally based on linear inequality constraints on the set of parameters. In order to apply our framework directly, a more direct parametrization, using only bound constraints would be more appropriate. This leads us to propose an alternate, yet classical, discretization process. 

We focus on the class of convex sets with two axes of symmetry. Since we are also working in a scale invariant setting, it is enough to parametrize concave and decreasing functions $y:[0,1]\to \Bbb{R}$. Given a uniform discretization of $[0,1]$ using $q+1$ points, observe that if $y_0,...,y_q$ are samples of a concave decreasing function at $x_i = i/q, i=0,...,q$ then:
\begin{itemize}
	\item the first order differences are $z_i=y_i-y_{i+1}, i=0,...,q-1$ are increasing
	\item the second order differences $\rho_{i+1}=z_{i+1}-z_{i}, i=0,...,q-2$ are non-negative.
\end{itemize}
Conversely, given non-negative values $\rho_i$, it is possible to construct samples of a concave decreasing function having $\rho_i$ as second order differences. Therefore we take $(\rho_i)_{i=1}^{q-1}$, $\rho_0:=z_0$ and $y_q$ as variables in our parametrization.

\begin{figure}
	\includegraphics[width=0.6\linewidth]{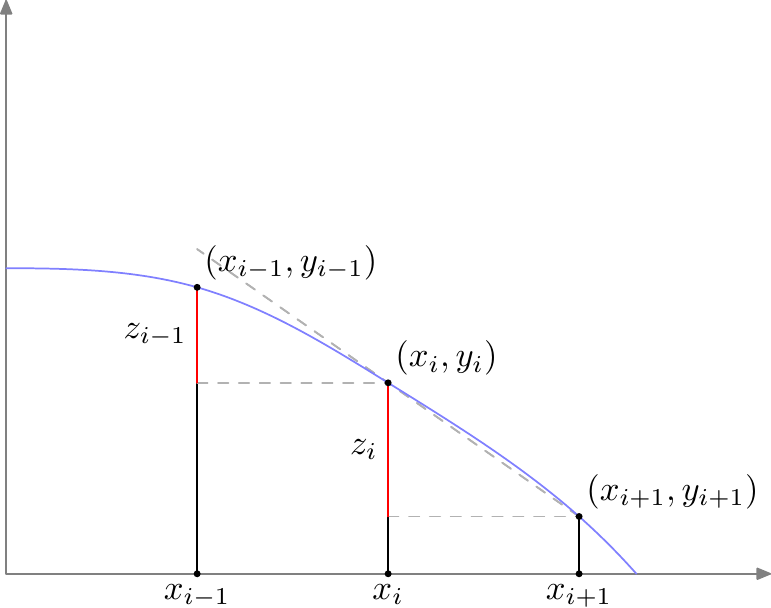}
	\caption{Parametrization of concave decreasing function using second order differences. The first order differences $z_i$ are decreasing.}
	\label{fig:convex-param}
\end{figure}

We immediately obtain the following equalities
\[y_i=y_q+\sum_{j=i}^{q-1} z_i,\qquad z_i=\sum_{k=0}^{i-1}\rho_k,\qquad i=0,...,q-1,\]
which show that $y_i$, for $i=0,...,q-1$, can be expressed in terms of $y_q$ and $(\rho_i)_{i=0}^{q-2}$ using the following expression:
\begin{equation}
\begin{pmatrix}
y_0 \\ y_1 \\...\\ y_{q-1}
\end{pmatrix}
=y_q+A\begin{pmatrix}\rho_0 \\ \rho_1 \\...\\ \rho_{q-1}
\end{pmatrix}
\label{eq:explicit-param}
\end{equation}
with $A=(A_{ij})$ given by $A_{ij} = q+1-\max\{i,j\}$ for $1 \leq i,j \leq q$. The coordinates of the boundary points of the discrete convex set are given by $(x_i,y_i), i=0,...,q$ for the first quadrant. They are symmetrized to obtain the rest of the boundary. The area and the perimeter are computed in a straightforward way. For the momentum of inertia, we use the explicit formulas for polygons, found for example in \cite{Soerjadi1968OnTC}, a direct consequence of Green's formulas. Since all computations are analytic in terms of the parameters, the partial derivatives of all quantities of interest are also computed analytically. 

{\bf (a) Randomly generated shapes.} Given the parametrization above and a number of parameters $q\geq 2$ we can generate random convex shapes and plot the points given by \eqref{eq:APW}. We generate $1000$ random shapes for $2\leq q \leq 10$ parameters. The results are plotted in Figure \ref{fig:randomAPW}. It can be observed that $q=2$ produces points on the upper part of the boundary, while higher values of $q$ produce points closer to the origin. In fact, as $q$ increases, the random shapes give points concentrated around the origin $(0,0)$. 

\begin{figure}
\centering
\includegraphics[width=0.6\textwidth]{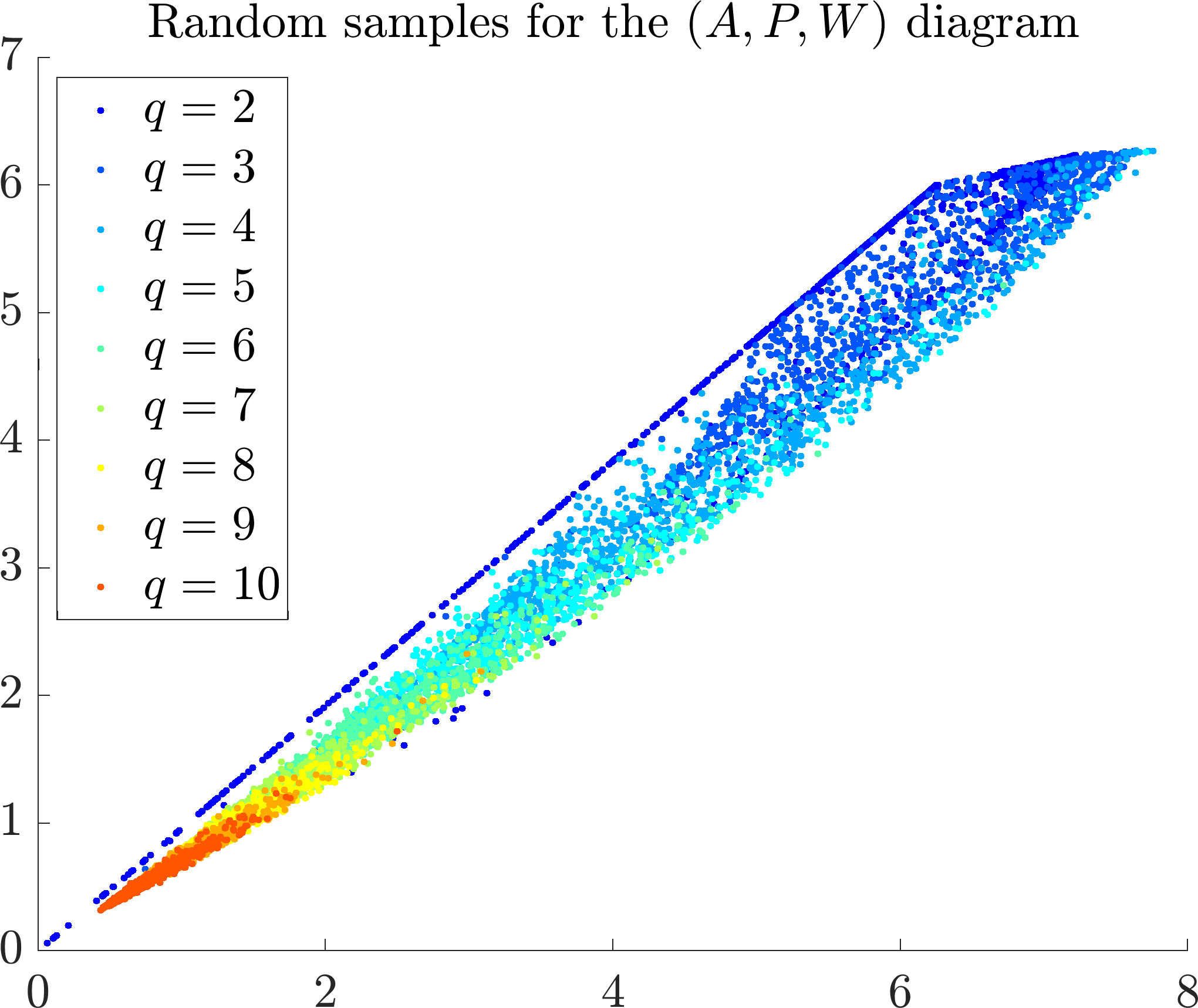}
\caption{Random sampling for the $(A,P,W)$ diagram using $q$ parameters for generating convex shapes, $q \in \{2,3,...,10\}$.}
\label{fig:randomAPW}
\end{figure}

{\bf (b) Using the numerical algorithms proposed in Section \ref{sec:framework}.}

We choose to work with $50$ parameters in \eqref{eq:explicit-param} for generating convex shapes. We generate $15$ random samples obtaining points very close to the origin, shown in the first image in Figure \ref{fig:APW}. The initial points do not give any meaningful information on the geometry of the diagram. However, applying Algorithm \ref{algo:global-ref} distributes these initial samples uniformly as shown in the second image in the same Figure. Then we continue the process, using the midpoints of edges of the Delaunay triangulation for adding more samples to the diagram. The multi-grid strategy uses $15$, $44$, $145$, $516$ samples, respectively. The final configuration uses $516\times 50 =25800$ parameters for the global iterative process.
\begin{figure}
\centering 
\includegraphics[width=0.3\textwidth]{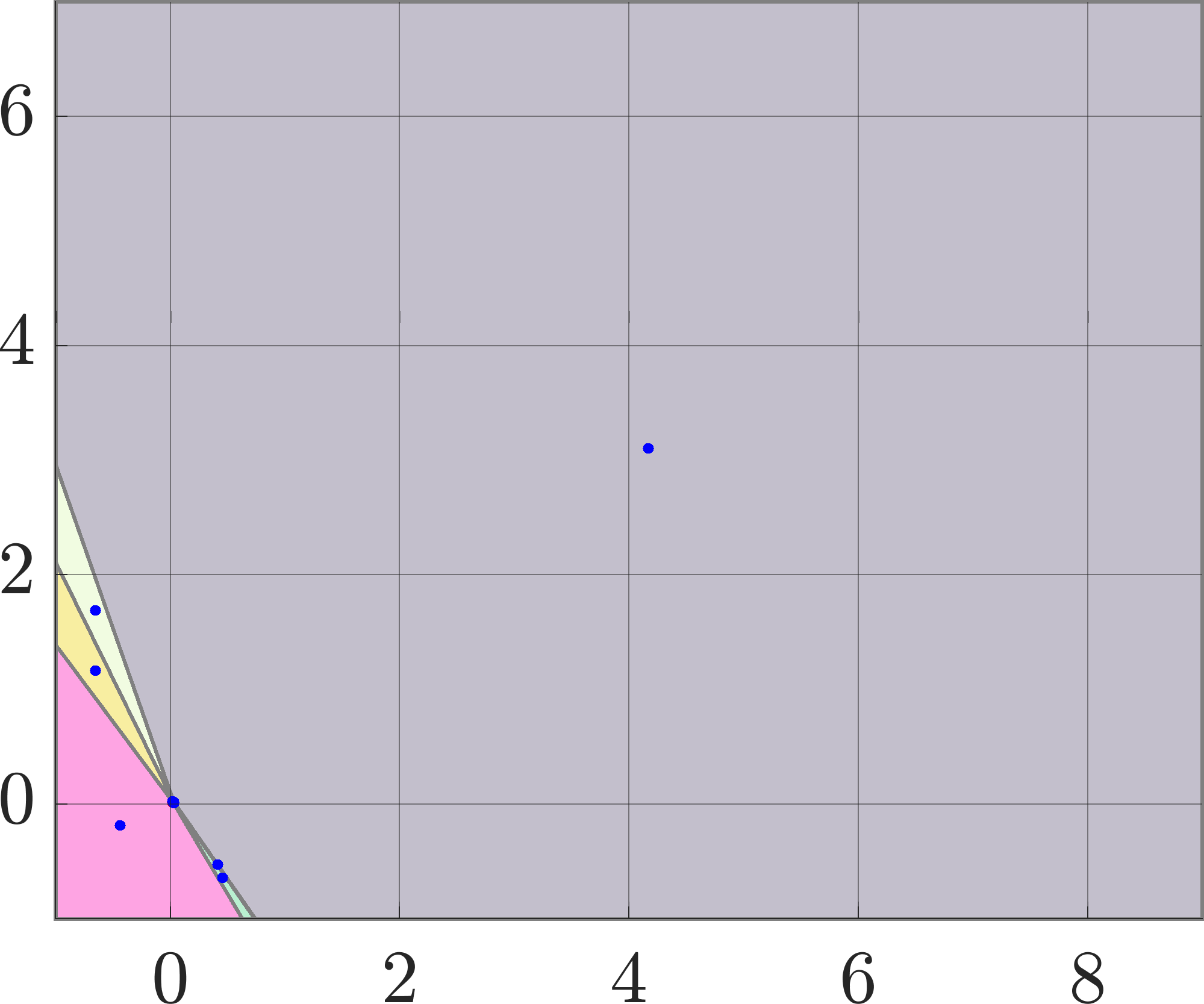}\quad
\includegraphics[width=0.3\textwidth]{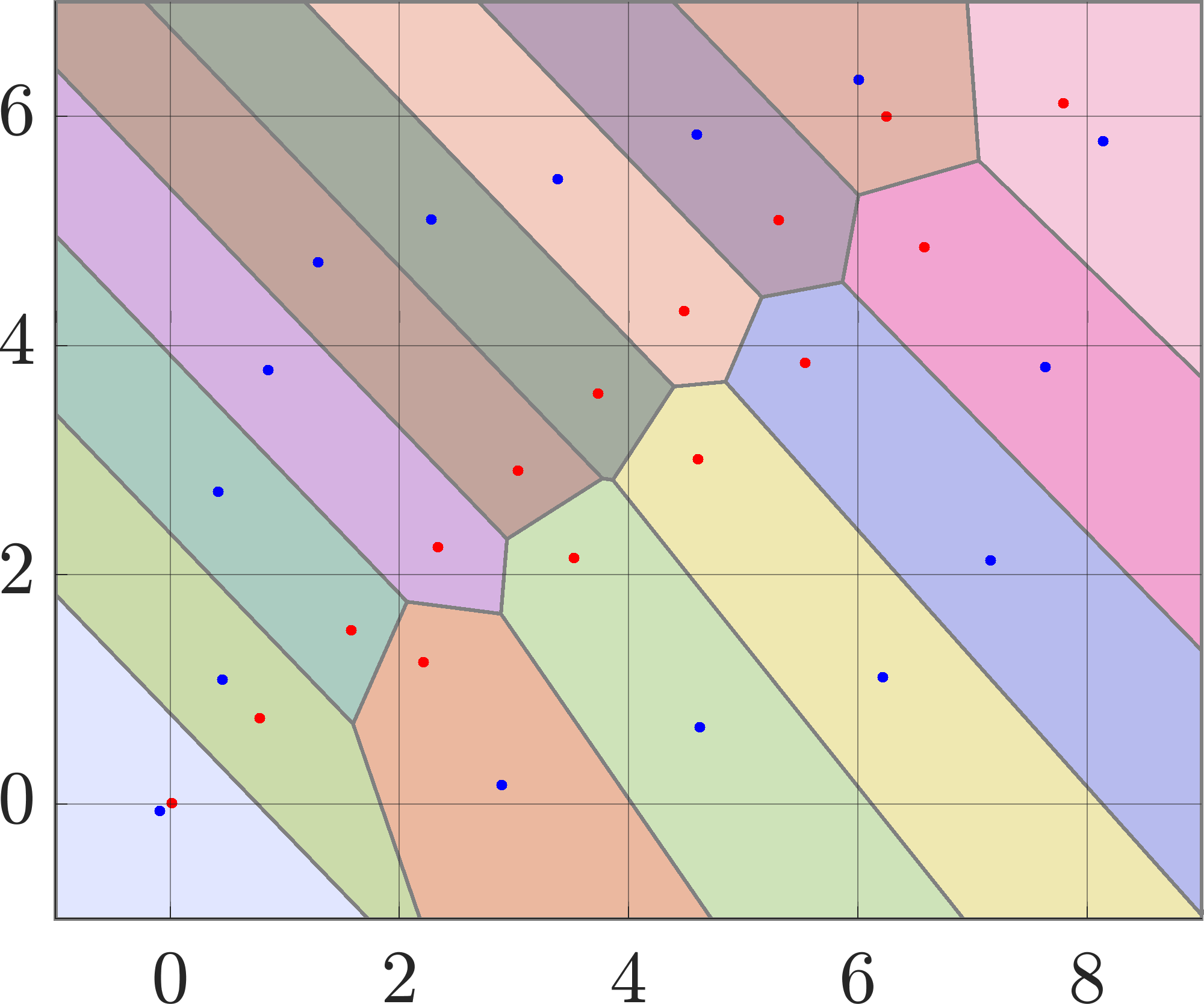}\quad
\includegraphics[width=0.3\textwidth]{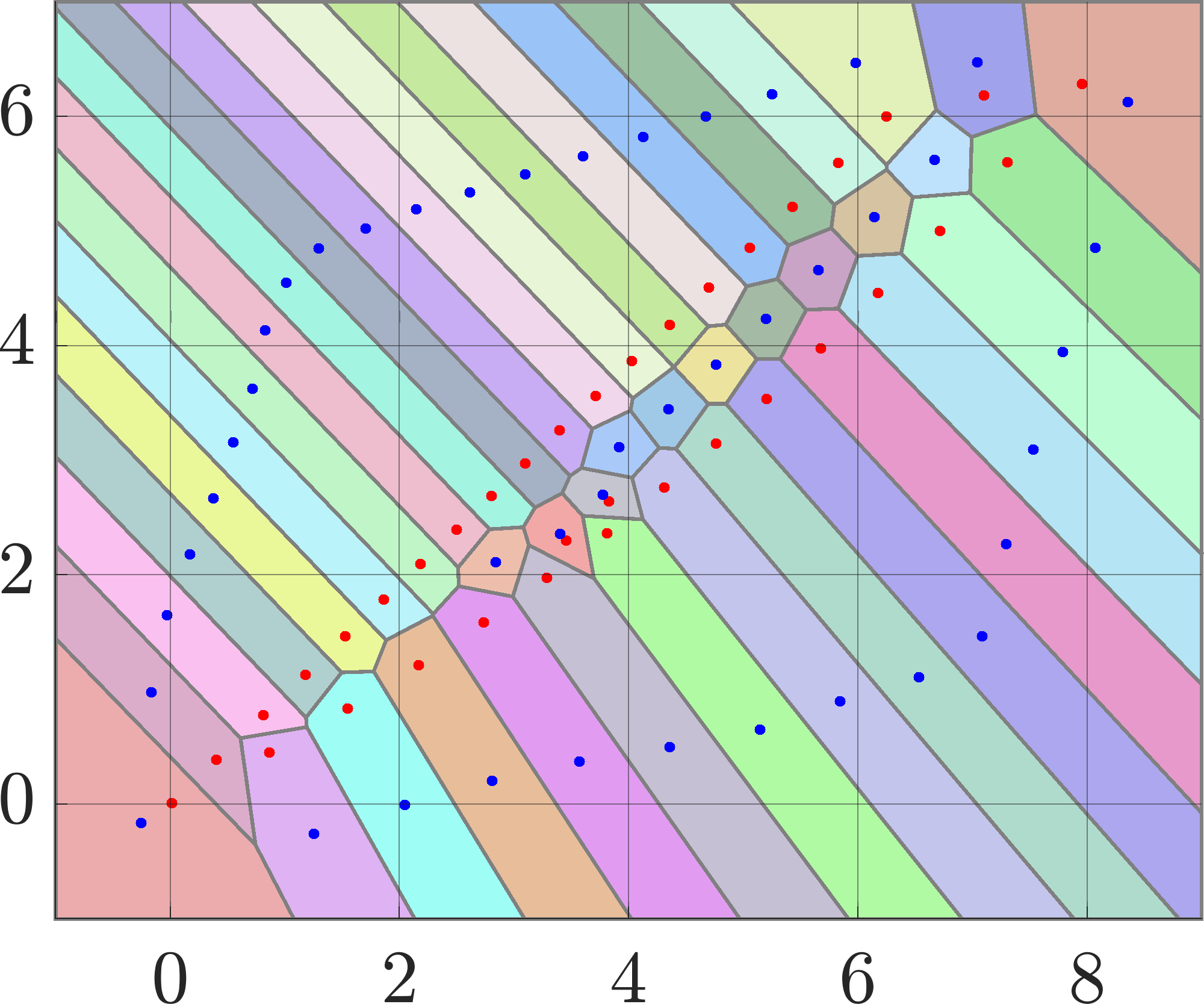}

\includegraphics[width=0.45\textwidth]{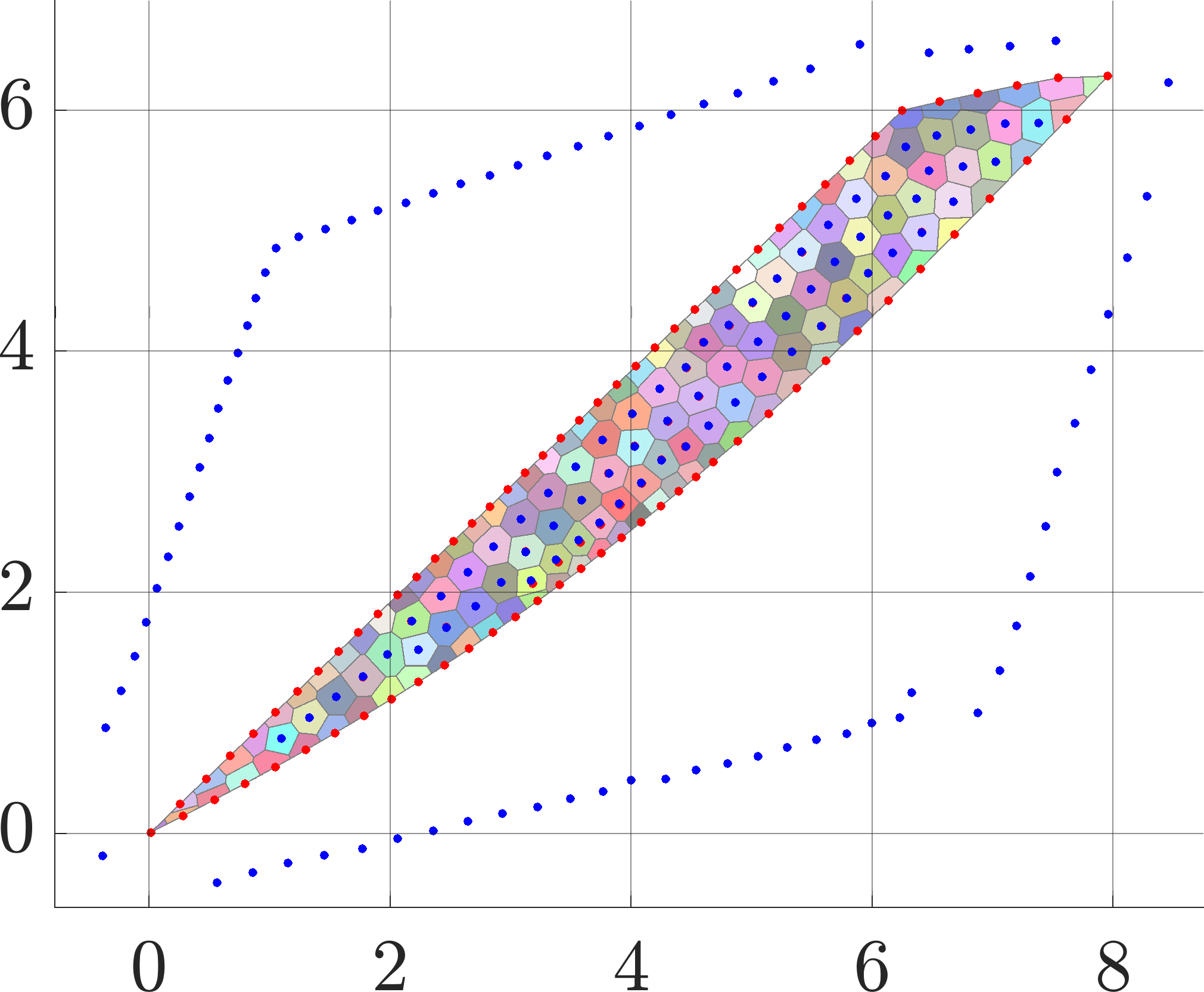}\quad
\includegraphics[width=0.45\textwidth]{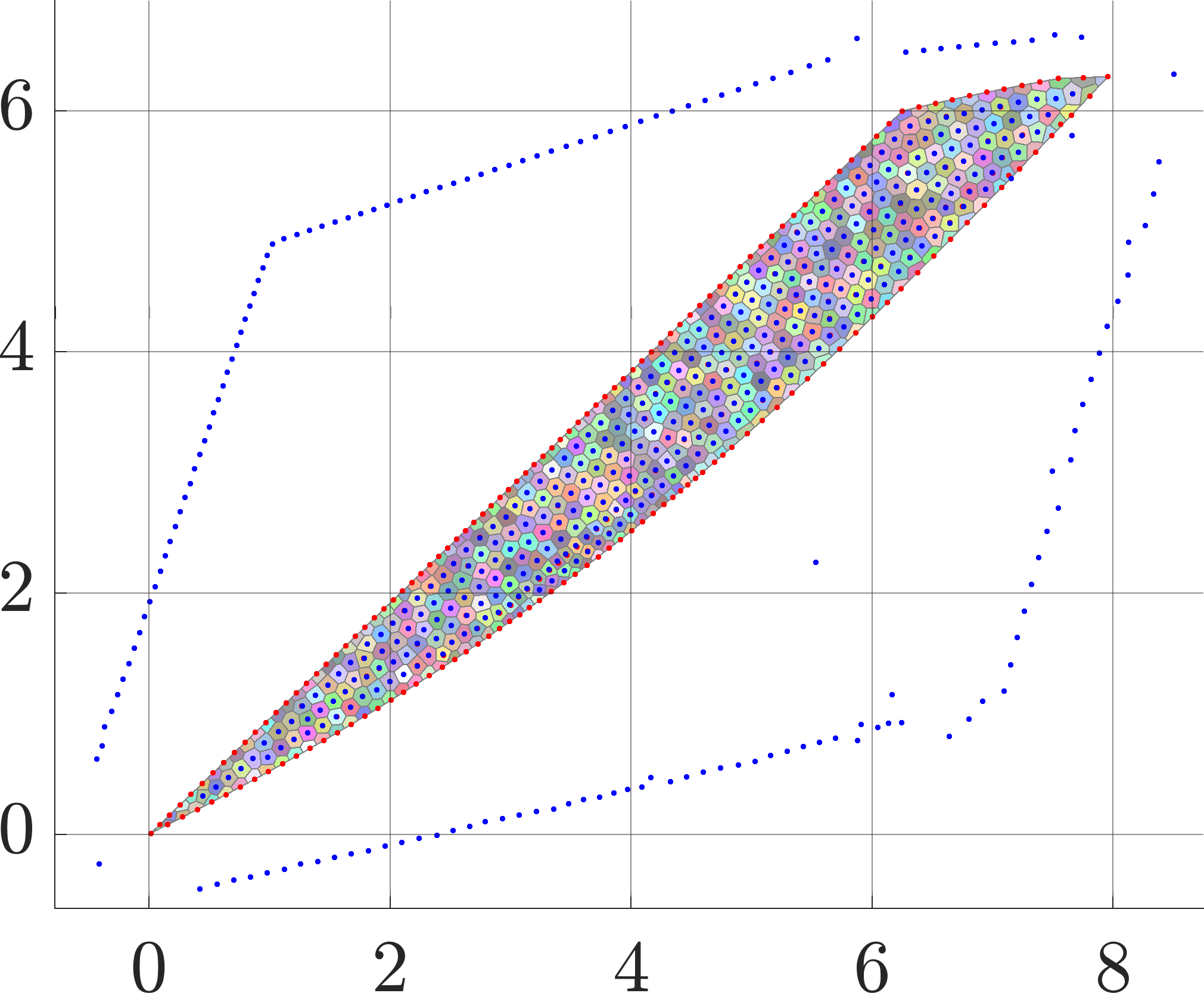}
\caption{Numerical approximation of the (area, perimeter, momentum of inertia) diagram.}
\label{fig:APW}
\end{figure}

Investigating shapes lying on the boundary of the Blaschke-Santal\'o diagram, shown in Figure \ref{fig:APW-Detail}, we observe the following:
\begin{itemize}
	\item The left upper boundary is generated by rhombi, flat towards the origin, going towards the square.
	\item The right upper boundary is generated by octagons and other polygons converging to the disk, corresponding to the upper-right corner of the diagram.
	\item The lower boundary contains shapes similar to stadiums or ellipses.
\end{itemize}

\begin{figure}
	\centering
	\includegraphics[width=0.8\textwidth]{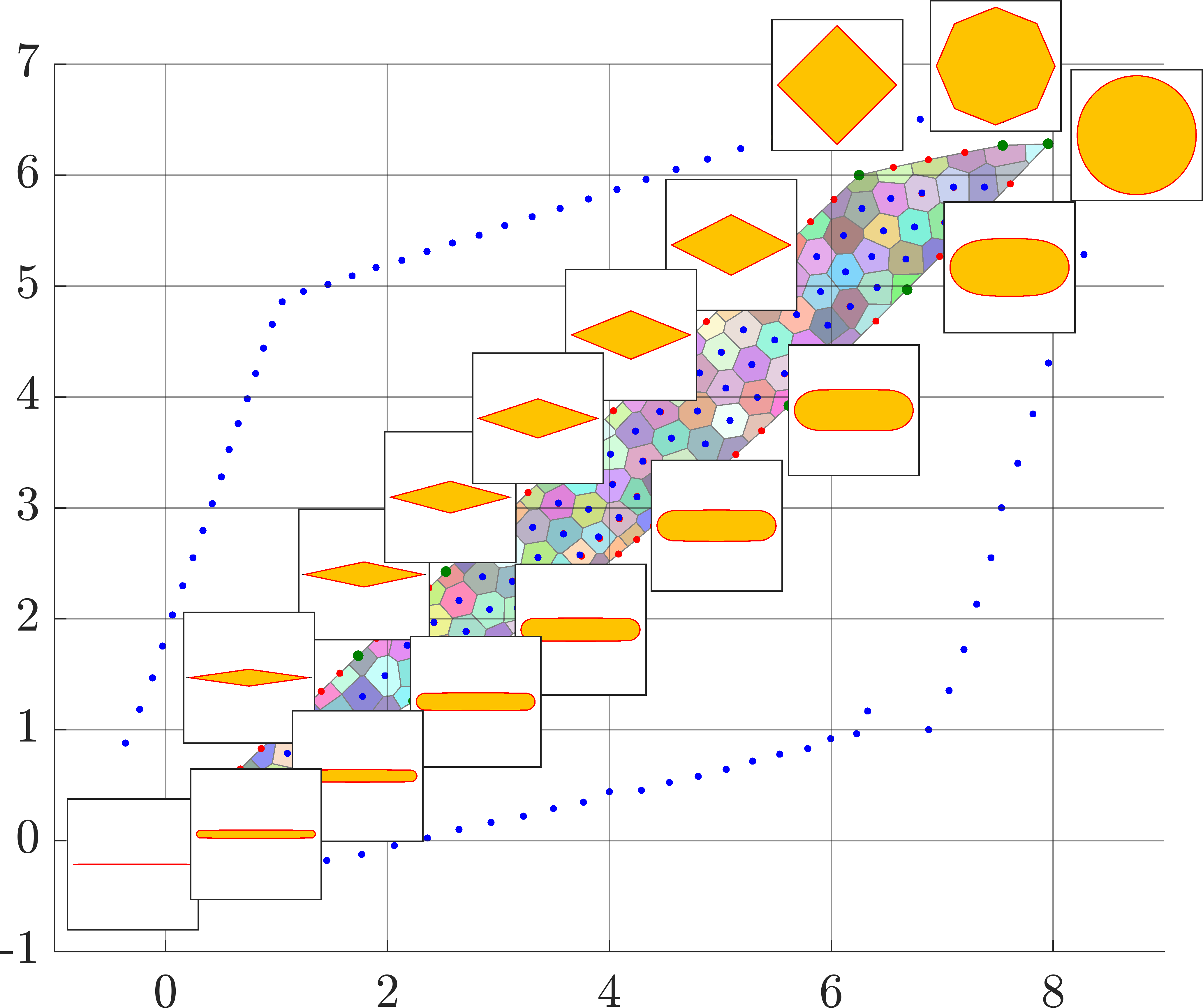}
	\caption{Examples of shapes corresponding to points on the boundary of the (Area, Perimeter, Momentum of inertia) diagram.}
	\label{fig:APW-Detail}
\end{figure}

The algorithms proposed, based on Centroidal Voronoi Tessellations, give a really accurate numerical description of the diagram. Like in the algebraic case, investigating the shapes on the boundary of the diagram provides insights regarding the possible analytical bounds and may guide theoretical study to obtain a complete description. More details regarding this diagram are given in \cite{APW-preprint}.

\section{Conclusions}

We propose efficient algorithms which approximate Blaschke-Santal\'o diagrams by generating samples having uniformly distributed images. The key ingredient is the search for images which produce Centroidal Voronoi Tessellations. The algorithms proposed, inspired from Lloyd's algorithm and the Variational method proposed in \cite{cvt-levy} are illustrated through examples coming from linear algebra and shape optimization. 

We observe that using a reasonable computational cost, compared with the usual Monte Carlo methods which generate randomized samples, the algorithms proposed achieve a precise description of the Blaschke-Santal\'o diagrams. Using a multi-grid strategy, more samples can be considered, further improving the description of these diagrams.


\bigskip

\noindent{\bf Acknowledgments.} The work of GB is part of the project 2017TEXA3H {\it``Gradient flows, Optimal Transport and Metric Measure Structures''} funded by the Italian Ministry of Research and University. The author is member of the Gruppo Nazionale per l'Analisi Matematica, la Probabilit\`a e le loro Applicazioni (GNAMPA) of the Istituto Nazionale di Alta Matematica (INdAM). The work of BB and EO was supported by the ANR Shapo (ANR-18-CE40-0013) program. The authors of \cite{APW-preprint} are warmly thanked for sharing with us their preliminary results.

\bigskip


\bigskip
\small\noindent
Beniamin Bogosel: Centre de Math\'ematiques Appliqu\'ees, CNRS,\\
\'Ecole polytechnique, Institut Polytechnique de Paris,\\
91120 Palaiseau, France \\
{\tt beniamin.bogosel@polytechnique.edu}\\
{\tt \nolinkurl{http://www.cmap.polytechnique.fr/~beniamin.bogosel/}}

\bigskip
\small\noindent
Giuseppe Buttazzo:
Dipartimento di Matematica,
Universit\`a di Pisa\\
Largo B. Pontecorvo 5,
56127 Pisa - ITALY\\
{\tt giuseppe.buttazzo@dm.unipi.it}\\
{\tt http://www.dm.unipi.it/pages/buttazzo/}

\bigskip
\small\noindent
Edouard Oudet:
Laboratoire Jean Kuntzmann (LJK),
Universit\'e Joseph Fourier\\
Tour IRMA, BP 53, 51 rue des Math\'ematiques,
38041 Grenoble Cedex 9 - FRANCE\\
{\tt edouard.oudet@imag.fr}\\
{\tt http://www-ljk.imag.fr/membres/Edouard.Oudet/} 

\end{document}